\documentclass[12pt,oneside,reqno]{amsart}

\usepackage[top=2.5cm,bottom=2.5cm,left=3cm,right=3cm]{geometry}

\usepackage{microtype}
\usepackage{mathtools}
\usepackage{mathrsfs}
\usepackage{newtxtext,newtxmath}
\usepackage[mathcal]{eucal}
\usepackage{tikz-cd}
\usepackage{enumitem}
\setlist{itemsep=1pt,topsep=4pt}
\usepackage{graphicx}
\usepackage{comment}
\usepackage{amsthm}
\usepackage{xcolor}
\definecolor{citeblue}{rgb}{0,0.35,0.75}

\theoremstyle{plain}
\newtheorem{theorem}{Theorem}[section]
\newtheorem{proposition}[theorem]{Proposition}
\newtheorem{lemma}[theorem]{Lemma}
\newtheorem{corollary}[theorem]{Corollary}

\theoremstyle{definition}
\newtheorem{definition}[theorem]{Definition}
\newtheorem{remark}[theorem]{Remark}
\newtheorem{question}[theorem]{Question}

\numberwithin{equation}{section}

\usepackage[
  colorlinks=true,
  linkcolor=citeblue,
  citecolor=citeblue,
  urlcolor=citeblue
]{hyperref}
\usepackage[nameinlink,noabbrev]{cleveref}

\crefname{equation}{}{}
\Crefname{section}{\S}{}
\Crefname{question}{Question}{Questions}
\Crefname{theorem}{Theorem}{Theorems}

\newcommand{\bfx}{\mathbf{x}}

\DeclareMathOperator{\GL}{GL}
\DeclareMathOperator{\SC}{SC}
\DeclareMathOperator{\Heg}{Heg}

\newcommand{\cR}{\mathcal{R}}
\newcommand{\cS}{\mathcal{S}}

\newcommand{\rO}{\mathrm{O}}

\newcommand{\CC}{\mathbb{C}}

\newcommand{\PP}{\mathbb{P}}
\newcommand{\RR}{\mathbb{R}}

\newcommand{\QQ}{\mathbb{Q}}
\newcommand{\ZZ}{\mathbb{Z}}

\renewcommand{\AA}{\mathbb{A}}

\newcommand{\BB}{\mathrm{BB}}

\DeclareMathOperator{\CH}{CH}

\DeclareMathOperator{\Pic}{Pic}
\DeclareMathOperator{\Sym}{Sym}
\DeclareMathOperator{\Span}{Span}
\DeclareMathOperator{\Supp}{Supp\,}

\newcommand{\Cl}{\operatorname{Cl}}

\title[Special cycles and Picard groups]{Special cycles and Picard groups of Baily--Borel compactifications}
\author{Zhiyuan Li}
\author{Zelin Ye}
\date{\today}

\address{Shanghai Center for Mathematical Sciences, Fudan University, 2005 Songhu Road, 200438 Shanghai, China}
\email{zhiyuan\underline{ }li@fudan.edu.cn}

\address{Shanghai Center for Mathematical Sciences, Fudan University, 2005 Songhu Road, 200438 Shanghai, China}
\email{zlye24@m.fudan.edu.cn}

\begin{document}
\begin{abstract}
In this paper, we answer two basic questions concerning algebraic cycles on orthogonal Shimura varieties attached to lattices. First, we prove that Kudla's special cycles and Heegner cycles have the same rational span in every codimension. This fills a gap in the codimension-one argument of \cite[Remark 2.4]{BLMM17}, pointed out by Bruinier--Zuffetti \cite{BZ26}. Second, in dimension at least ten, we prove that the rational Picard group of the Baily--Borel compactification is generated by the Hodge class. The proof of the second result reduces, via isotropic induction on the discriminant form, to the K3-type case treated in our previous work.
\end{abstract}

\maketitle

\section{Introduction}

Special cycles on orthogonal Shimura varieties play a central role in Kudla's program, which relates their generating series and arithmetic intersection numbers to modular forms and derivatives of Eisenstein series \cite{Kud04}. The work of Kudla--Millson \cite{KM90} relates their cohomology classes to Siegel modular forms through theta series. In the adelic formulation, special cycles are defined using finite Schwartz functions that specify arithmetic conditions on the defining vectors.

A distinguished family is given by Heegner cycles defined on orthogonal Shimura varieties $X_L$ attached to a lattice $L$, indexed by Gram matrices and discriminant classes of the lattice. In codimension one, they support the divisors of Borcherds products \cite{Bor98}. They also arise naturally in moduli theory: on moduli spaces of quasi-polarized K3 surfaces, Heegner divisors describe the Noether--Lefschetz conditions studied by Maulik--Pandharipande \cite{MP13}. In the range covered by \cite{BLMM17}, these divisors generate the rational Picard group (see also \cite{BZ26}). Under the hypotheses of \cite{HLMY26}, the classes that extend to the Baily--Borel compactification are precisely the rational multiples of the Hodge class.

This suggests two basic questions for orthogonal Shimura varieties:
\begin{enumerate}[label=\textup{(\Roman*)}]
    \item On $X_L$, is the span of Kudla's special cycles generated by Heegner cycles? Moreover, does it contain the classes of all their irreducible components?

    \item For orthogonal Shimura varieties, which special divisor classes extend to the Baily--Borel compactification?
\end{enumerate}

We restrict Question~(I) to $X_L$, since the span of special cycles may grow at deeper congruence levels, while the dimension of the Heegner span remains unchanged. Question~(I) is also relevant to the tautological rings of many moduli spaces. For example, irreducible Noether--Lefschetz cycle classes generate the tautological ring of the moduli space of polarized hyperk\"ahler varieties. Understanding whether these classes lie in the Heegner span would therefore connect tautological rings with modular forms and imply their finite generation.

Question~(II) makes sense for arbitrary congruence quotients, but a  solution will follow from the case of $X_L$ as $L$ varies.

In this paper, we resolve the first part of Question~(I) and Question~(II)  in substantial generality.

\subsection{Span of special cycles and Heegner cycles}

Let $L$ be a nondegenerate even lattice of signature $(2,n)$, put $V=L\otimes\QQ$, and write $A_L=L^\vee/L$. Fix a connected component $\mathcal D^+$ of the associated type IV domain, and set
\[
\widetilde{\rO}^{+}(L)=\ker\bigl(\rO^+(L)\longrightarrow\rO(A_L)\bigr), \qquad X_L=\widetilde{\rO}^{+}(L)\backslash\mathcal D^+.
\]
Let $K_L\subset\rO(V)(\AA_f)$ be the compact open subgroup preserving $\widehat L$ and acting trivially on $A_L$.

For $1\leq r\leq n$, a negative definite matrix $T\in\Sym_r(\QQ)$, and $\varphi\in\cS(V(\AA_f)^r,\QQ)^{K_L}$, the adelic special cycle $Z(T,\varphi)$ is defined by tuples $\bfx$ satisfying
\[
q(\bfx)=\tfrac12((x_i,x_j))=T,
\]
weighted by $\varphi(\bfx)$. Taking $\varphi=\mathbf1_{\mu+\widehat L^r}$, for $\mu\in A_L^r$, gives the Heegner cycle
\[
Z_L(T,\mu)=Z(T,\mathbf1_{\mu+\widehat L^r}).
\]
The cycle $Z_L(T,\mu)$ includes all tuples with the prescribed Gram matrix and discriminant data and need not be irreducible.

\begin{remark}
    Heegner cycles are also defined for congruence subgroups of $\widetilde{\rO}^{+}(L)$. Passing to such a subgroup preserves the dimension of their span \cite[(2.12)]{Kud04}, whereas the dimension of the span of Kudla's special cycles can increase (see, for example, \cite[\S~3]{Bru02}). The comparison below is therefore restricted to the full stable orthogonal group $\widetilde{\rO}^{+}(L)$.   
\end{remark}

Let $\SC^r(X_L)$ and $\Heg^r(X_L)$ denote the rational spans of their classes in $\CH^r(X_L)_\QQ$. We will in fact prove the comparison before passing to rational equivalence. There is a natural inclusion
\[
\Heg^r(X_L)\subseteq\SC^r(X_L).
\]
The reverse inclusion in codimension one was asserted in \cite[Remark~2.4]{BLMM17}. As pointed out by Bruinier--Zuffetti \cite[Remark~4.9]{BZ26}, the argument there is incomplete: a $K_L$-invariant Schwartz function need not be a linear combination of characteristic functions of cosets of $\widehat L$. Our first theorem fills this gap and establishes the equality of the two spans in every codimension.

\begin{theorem}\label{thm:comparison-main}
    Let $L$ be an even lattice of signature $(2,n)$. For every $1\leq r\leq n$, we have 
    \[
    \SC^r(X_L)=\Heg^r(X_L),
    \]
    in $\CH^r(X_L)_\QQ$.
\end{theorem}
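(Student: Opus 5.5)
Since $Z(T,\varphi)$ is linear in $\varphi$ and $\Heg^r(X_L)\subseteq\SC^r(X_L)$ is clear, it is enough to show that for every $\varphi\in\cS(V(\AA_f)^r,\QQ)^{K_L}$ the cycle $Z(T,\varphi)$ lies in the $\QQ$-span of the cycles $Z_L(T',\mu)$. I would prove this already at the level of cycles, as the paper suggests, before passing to rational equivalence.

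\textbf{Step 1: orbit decomposition.} Fix $T$. The cycle $Z(T,\varphi)$ only depends on the restriction of $\varphi$ to the set $\Omega_T=\{\bfx\in V^r: q(\bfx)=T\}$ of rational tuples. By weak approximation, or by the usual description of the $K_L$-orbits on special cycles, this reduces to the $\widetilde{\rO}^+(L)$-orbits of rational tuples $\bfx\in V^r$ with Gram matrix $T$. Each component of $Z(T,\varphi)$ is the image of $\mathcal D^+_{\bfx}$ for such an orbit, weighted by $\varphi(\bfx)$. By $K_L$-invariance, $\varphi$ is constant on $K_L$-orbits.

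\textbf{Step 2: lattice-theoretic reduction.} For a rational tuple $\bfx$, choose $N$ with $N\bfx\in (L^\vee)^r$. I would replace Heegner cycles for $L$ with data $(T,\mu)$ by those for $T'=N^2T$, using $Z(T,\varphi(\cdot))=Z(N^2T,\varphi(N^{-1}\cdot))$. The key point is then to show that the indicator of a single $K_L$-orbit in $\widehat{L^\vee}^r$ of vectors with fixed $q=T$ is a $\QQ$-linear combination of indicators $\mathbf 1_{\mu+\widehat L^r}$ restricted to $\Omega_T$. A general $K_L$-invariant function on $(L^\vee)^r$ can distinguish finer invariants than the coset modulo $L$: typically the divisibility and the saturation data of the sublattice $\langle \bfx\rangle$, namely the isomorphism class of $(\langle\bfx\rangle_\QQ\cap L)$ and its embedding. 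This is exactly the gap noted in the text.

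\textbf{Step 3: Möbius inversion over divisibility.} I would stratify tuples by the saturated lattice $M=\langle\bfx\rangle_\QQ\cap L^\vee$ and the position of $\bfx$ in it. Heegner cycles $Z_L(T,\mu)$ sum over all $\bfx\equiv\mu$, which is a sum over all overlattices and divisibility types. Writing $\bfx=\bfx' g$ with $g\in M_r(\ZZ)\cap\GL_r(\QQ)$ and $\bfx'$ primitive, one gets a triangular relation
\[
Z_L(T,\mu)=\sum_{g} Z^{\mathrm{prim}}_L\bigl(g^{-t}Tg^{-1},\mu'\bigr)
\]
summed over $g$ and compatible $\mu'$. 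The relation is finite because $T$ is negative definite, and it is triangular with respect to $|\det g|$. Inverting it expresses the primitive cycles $Z^{\mathrm{prim}}$ in the Heegner span.

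It then remains to show that $K_L$-orbits of primitive tuples with fixed $(T,\mu)$ are already determined by $(T,\mu)$ together with finitely many local invariants. Any $K_L$-invariant $\varphi$ is constant on these orbits. I would use the Eichler criterion/strong approximation locally: $K_{L,p}$-orbits of primitive vectors in $L_p^\vee$ are determined by norm and class in $A_L$, at least when $L_p$ contains enough hyperbolic planes. For general $L$ I would instead apply the inversion above prime by prime on local Schwartz functions, using factorizability of $\varphi$ into $\otimes_p\varphi_p$, which holds after taking linear combinations.

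\textbf{Main obstacle.} I expect the hardest step to be the local one: showing that every $K_{L,p}$-invariant $\varphi_p$, restricted to $\{q=T\}$, is a combination of dilates $\mathbf 1_{\mu+L_p^r}(p^{-k}\cdot)$. This needs a local Eichler-type orbit classification in rank $r$ without a hyperbolic-plane hypothesis. For that I would try Witt's theorem plus a Hensel argument on the finite quotient $L_p^\vee/p^kL_p$.
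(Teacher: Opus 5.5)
Your architecture is the paper's. You write $N\bfx=\mathbf yA$, with $\mathbf y$ a basis of the saturation $L^\vee\cap\Span_\QQ(N\bfx)$ and $A$ in a finite set of representatives for $\GL_r(\ZZ)\backslash\{A\in M_r(\ZZ):\det A\neq0\}$. You rewrite $Z(T,\varphi)$ through primitive Heegner cycles $Z^{\mathrm{prim}}(A^{-t}N^2TA^{-1},\nu)$, and invert the triangular saturation relation in your Step~3, which the paper simply cites as \cite[Lemma~4.13]{BZ26}.

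\textbf{The gap.} The step on which everything rests is left open. For $Z^{\mathrm{prim}}(S,\nu)$ to get a well-defined coefficient, $\varphi(N^{-1}\mathbf yA)$ must take the \emph{same} value on all saturated $\mathbf y\in\nu+L^r$ with $q(\mathbf y)=S$. Concretely, at every prime, saturated embeddings $\ZZ_p^r\hookrightarrow L_p^\vee$ with equal Gram matrices that are congruent modulo $L_p$ must form a single $K_{L,p}$-orbit.

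Your version, orbits determined by $(T,\mu)$ ``together with finitely many local invariants'', would not suffice. Any extra invariant could be detected by some $K_L$-invariant $\varphi$, while $Z^{\mathrm{prim}}(S,\nu)$ lumps all such orbits together. Passing to factorizable $\varphi=\otimes_p\varphi_p$ and working prime by prime does not avoid this; it reduces exactly to this local statement. Your suggested tools do not supply it either:
\begin{itemize}
\item Eichler's criterion needs hyperbolic planes, which is precisely the hypothesis you must avoid.
\item Witt's theorem over $\QQ_p$ gives no integrality and no control on the discriminant group.
\item A Hensel argument on $L_p^\vee/p^kL_p$ presupposes an integral extension modulo $p^k$, which is the same extension problem over a finite ring, and is delicate at $p=2$.
\end{itemize}

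\textbf{The missing input.} The paper proves this in Lemma~\ref{lem:local-orbits} using Kneser's extension theorem \cite[Theorem~1.2.2]{Kit93}. Set $\sigma\colon X(R^r)\to Y(R^r)$, $Xa\mapsto Ya$.
\begin{itemize}
\item The hypotheses say exactly that $\sigma$ is an isometry with $\sigma(s)-s\in L$.
\item Saturation of both images in $L^\vee$ makes $L=(L^\vee)^\vee\to\operatorname{Hom}_R(E,R)$ surjective for each image $E$.
\item Kneser's theorem then extends $\sigma$ to an isometry $g$ of $L^\vee$ with $(g-1)L^\vee\subseteq L$.
\end{itemize}
Hence $g\in\widetilde{\rO}(L_p)=K_{L,p}$, with no splitting hypothesis at all.

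\textbf{A smaller point.} Your Step~2 claim, with undilated indicators $\mathbf 1_{\mu+\widehat L^r}$ on a fixed $\Omega_T$, is false as stated: these only see the class in $A_L^r$, so in general they cannot tell saturated from non-saturated tuples. For $r\geq2$, the scalar dilates $\mathbf 1_{\mu+L_p^r}(p^{-k}\,\cdot\,)$ in your last paragraph are still too few. What is true, and what your Step~3 actually uses, is the version with matrix dilates $\mathbf 1_{\mu+\widehat L^r}(\,\cdot\,C)$, $C\in\GL_r(\QQ)$. On $\Omega_T$ these give exactly the Heegner cycles $Z_L(C^tTC,\mu)$.
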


The theorem allows generation results for special cycles to be
restated in terms of Heegner cycles. The proof combines a local
extension theorem for saturated embeddings with decomposition
by saturation: saturated tuples with the same Gram matrix and
discriminant classes lie in the same $K_L$-orbit, so every
$K_L$-invariant Schwartz function takes the same value on them.

A stronger question concerns the individual irreducible components of these cycles.

\begin{question}
    Does the span of Heegner cycles contain the classes of all their irreducible components?
\end{question}

This certainly holds when primitive Heegner cycles are irreducible. Such irreducibility, however, generally requires additional assumptions on the lattice. The question remains open in general: Theorem~\ref{thm:comparison-main} identifies the spans of two families of weighted sums, but does not separate their individual components.

\subsection{Picard group of Baily--Borel compactifications}

We next consider the Picard group of Baily--Borel compactification of orthogonal Shimura varieties. For a congruence subgroup $\Gamma\subseteq\rO^+(L)$, write
\[
X_\Gamma=\Gamma\backslash\mathcal D^+,\qquad \overline X_\Gamma=\overline{X}_\Gamma^{\BB}.
\]
Let $\lambda_\Gamma$ be the rational Hodge class and $\bar\lambda_\Gamma$ its extension, with the sign convention for which $\bar\lambda_\Gamma$ is ample.

The boundary of $\overline X_\Gamma$ has dimension at most one \cite{BB66}. Nevertheless, a divisor on $X_\Gamma$ need not have a $\QQ$-Cartier closure; see \cite[Proposition~5.1]{BF01} for a local obstruction. Thus generation of the open Picard group by special divisors leaves a separate problem of determining which combinations extend. In our earlier work \cite{HLMY26}, we studied this obstruction and proved Picard rank one under arithmetic hypotheses on the lattice. In particular, our result applies at the stable orthogonal level to lattices of K3 type, which split $2U$ over $\ZZ$ and $3U$ over every $\ZZ_p$. We remove these splitting hypotheses in dimension at least ten.

\begin{theorem}\label{thm:picard-main}
    Let $L$ be an even lattice of signature $(2,n)$ with $n\geq10$. For every congruence subgroup $\Gamma\subseteq\rO^+(L)$,
    \[
    \Pic(\overline X_\Gamma)_\QQ=\QQ\bar\lambda_\Gamma.
    \]
\end{theorem}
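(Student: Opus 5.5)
We first reduce to the stable orthogonal level $\Gamma=\widetilde{\rO}^{+}(L)$. If $\Gamma'\subseteq\Gamma$ is a finite-index congruence subgroup, the map $\overline X_{\Gamma'}\to\overline X_\Gamma$ is a finite surjective morphism of normal projective varieties. Pullback and pushforward (norm) of $\QQ$-Cartier classes show that $\Pic(\overline X_{\Gamma})_\QQ$ injects into the $\Gamma/\Gamma'$-invariants of $\Pic(\overline X_{\Gamma'})_\QQ$. This is only the easy direction. The main reduction goes through a sublattice. Any congruence subgroup contains the discriminant kernel $\widetilde{\rO}^{+}(L_N)$ of some lattice $L_N\subseteq L$ (for instance $L_N=NL$, rescaled appropriately, or a suitable sublattice), since principal congruence subgroups of $\rO^+(L)$ are discriminant kernels of smaller lattices. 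So it suffices to prove the statement for $X_{L'}$ for all even lattices $L'$ of signature $(2,n)$, $n\ge 10$. Then, for a general $\Gamma\supseteq\widetilde{\rO}^{+}(L')$, any $\QQ$-Cartier class on $\overline X_\Gamma$ pulls back to a multiple of $\bar\lambda$, and injectivity of pullback gives the claim.

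For $X_L$ itself, the plan is as follows. By the codimension-one theory of \cite{BLMM17}, valid at least for $n\ge 3$ (we assume here that their generation range covers $n\geq 10$), combined with Theorem~\ref{thm:comparison-main}, $\Pic(X_L)_\QQ$ is spanned by $\lambda$ and the Heegner divisors $Z_L(m,\mu)$. Let $D$ be a $\QQ$-Cartier divisor on $\overline X_L$. Since the boundary has codimension $\ge 2$, $D$ is determined by its restriction $D|_{X_L}=a\lambda+\sum c_{m,\mu}Z_L(m,\mu)$. We must show that the Heegner part is zero in $\Pic(X_L)_\QQ$. This is where isotropic induction on the discriminant form comes in.

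If $A_L$ contains a nontrivial isotropic subgroup $H$, let $L_H\supseteq L$ be the corresponding overlattice. Then $\widetilde{\rO}^{+}(L)\subseteq \widetilde{\rO}^{+}(L_H)$ up to finite index, so $X_L\to X_{L_H}$ is finite and extends to the Baily--Borel compactifications. The obstruction to extension of $D$, the local $\QQ$-Cartier condition at the 0- and 1-dimensional cusps of \cite[Proposition~5.1]{BF01} and \cite{HLMY26}, is expressed through Fourier--Jacobi/Borcherds data of the vector-valued coefficients $c_{m,\mu}$. These data behave compatibly under the standard arrow operators $\uparrow_L^{L_H}$ and $\downarrow$ between $\CC[A_L]$ and $\CC[A_{L_H}]$. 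Pushing $D$ forward by the norm gives an extendable class on $\overline X_{L_H}$, which by induction on $|A_L|$ is a multiple of $\bar\lambda$. Pulling back, we must then check that the kernel of pushforward (the classes killed by the norm) cannot satisfy the extension condition unless zero. We plan to do this by showing that the cusp conditions at all isotropic planes containing the relevant isotropic vectors force the $\mu$-components supported off $H^\perp$ to vanish. When $A_L$ is anisotropic, we use $n\ge 10$: an even lattice of rank $\ge 12$ with anisotropic discriminant has small $p$-rank (at most $4$ at odd $p$), so $L$ splits $2U$ over $\ZZ$ and $3U$ locally at every $p$ (by Nikulin/Eichler criteria). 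This is the K3-type case covered by \cite{HLMY26}.

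The main obstacle is the inductive step. Concretely, we need to show that the $\QQ$-Cartier condition on $\overline X_L$ really descends and lifts along $X_L\to X_{L_H}$, i.e.\ that the ``new'' Heegner classes indexed by $\mu\notin H^\perp/H$-lifts are detected by the boundary obstruction. We would first try to make the boundary obstruction explicit at a 1-dimensional cusp $I$ as a linear functional on obstruction spaces attached to $I^\perp/I$ (weight-one theta/Jacobi data). We would then use that for $n\ge10$ there are enough isotropic planes meeting each discriminant class so that these functionals jointly separate the non-descended directions. The rank bound $n\ge 10$ should also guarantee that $I^\perp/I$ itself has anisotropic or splitting discriminant, which is what lets the induction close.
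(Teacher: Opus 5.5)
Your reduction from a general congruence subgroup $\Gamma$ to a discriminant kernel $\widetilde{\rO}^{+}(aL)\subseteq\Gamma$, followed by injectivity of finite pullback, matches the paper. Your base case (anisotropic $A_L$ has small $p$-lengths, hence $L$ is of K3 type when $n\geq10$) is also fine. The gap is the inductive step. You flag it yourself but do not supply it, and the strategy you sketch would not close it as stated.

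With a single isotropic $H$, knowing $(\overline f_H)_*\beta=0$ for $\beta=\alpha-c\bar\lambda_L$ gives little information. The pullbacks $f_H^*\Pic(X_{L_H})_\QQ$ only produce $\lambda_L$ and the Heegner combinations in the image of $\uparrow_H^{A_L}$, namely $\sum_{h\in H}Z_L(t,\mu+h)$ with $\mu\in H^\perp$. In general this is a proper subspace of $\Pic(X_L)_\QQ$. What is missing includes classes with $\mu\notin H^\perp$, but also non-$H$-invariant combinations inside $H^\perp$, such as $Z_L(t,\mu)-Z_L(t,\mu+h)$. So even proving that the ``components off $H^\perp$'' vanish would not finish the argument. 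Handling the rest would require an explicit description of the $\QQ$-Cartier obstruction at the one-dimensional cusps, together with a separation theorem for it. You provide neither. The claims that enough isotropic planes separate the non-descended directions, and that $I^\perp/I$ has anisotropic or split discriminant, are unsupported.

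The missing idea is to use a \emph{finite family} of overlattices whose isotropic inductions \emph{jointly} span $\QQ[A_L]_a$, instead of one overlattice at a time.
\begin{itemize}
\item By the spanning results of \cite{Mu24}, $\QQ[A]$ is spanned by inductions from isotropic subgroups of order $p$ whenever $\ell_p(A)$ is large: at least $6$ for odd $p$, at least $7$ for $p=2$, or $\ell_2(A)=6$ with $\bar q(A[2])\subset\{0,1/2\}$.
\item Iterating this gives $H_1,\dots,H_m$ with $H_i^\perp/H_i$ of small length (not necessarily anisotropic) and $\QQ[A_L]_a=\sum_i\uparrow_{H_i}^{A_L}\QQ[A_{M_i}]_a$. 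The bound $n\geq10$ then makes each $M_i=L_{H_i}$ of K3 type.
\item The pullback formula for Heegner divisors, combined with Theorem~\ref{thm:comparison-main}, yields $\Pic(X_L)_\QQ=\sum_i f_i^*\Pic(X_{M_i})_\QQ$.
\end{itemize}
After this, no boundary analysis is needed. Take $\beta$ with $\beta\cdot\bar\lambda_L^{n-1}=0$.
\begin{itemize}
\item Picard rank one on each $\overline X_{M_i}$, together with the projection formula, forces $(\overline f_i)_*\beta=0$.
\item Write $\beta|_{X_L}=\sum_i f_i^*\beta_i$ and let $\Delta_i$ represent $\beta_i$. The boundary has codimension at least $3$, so Weil closures and the projection formula give $\beta^2\cdot\bar\lambda_L^{n-2}=\sum_i\deg\bigl((\overline f_i)_*\beta\cdot\overline{\Delta_i}\cdot\bar\lambda_{M_i}^{n-2}\bigr)=0$.
\item By the Hodge index theorem, $\beta$ is numerically trivial. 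It is therefore zero, because $\Pic(\overline X_L)_\QQ\hookrightarrow N^1(\overline X_L)_\QQ$ by Theorem~\ref{thm:picard-inputs}(2).
\end{itemize}
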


Since restriction to $X_\Gamma$ is injective on rational Picard groups, the theorem says that a rational divisor class extends precisely when it is a multiple of the Hodge class. It therefore determines the extendable part of the open Picard group uniformly in the lattice and congruence level.

\medskip
\subsection{Idea of the proof}
\begin{itemize}
    \item For Theorem~\ref{thm:comparison-main}, let $\Heg_{\mathrm{prim}}^r(X_L)$ denote the span of primitive Heegner-cycle classes and let $\mathrm{Irr}^r(X_L)$ denote the span of the classes of irreducible components of special cycles. We have
    \[
    \mathrm{Heg}_{\mathrm{prim}}^r(X_L)=\mathrm{Heg}^r(X_L) \subseteq \mathrm{SC}^r(X_L) \subseteq \mathrm{Irr}^r(X_L).
    \]
    The first equality follows from the decomposition and inversion formulas of Bruinier–Zuffetti \cite[Lemma~4.13]{BZ26}. We actually prove the reverse inclusion
    \[
    \mathrm{SC}^r(X_L)\subseteq \mathrm{Heg}_{\mathrm{prim}}^r(X_L).
    \]

    The key is Lemma~\ref{lem:local-orbits}: saturated tuples with the same Gram matrix and discriminant classes lie in the same $K_L$-orbit and hence have the same Schwartz weight. Decomposition by saturation therefore expresses every special cycle as a finite rational combination of primitive Heegner cycles.

    \item For Theorem~\ref{thm:picard-main}, the key reduction is achieved by \emph{isotropic induction} on the discriminant form. Starting from $A_L$, we pass to even overlattices $M=L_H\subset L^\vee$ associated with isotropic subgroups $H\subset A_L$. The induced finite morphisms $f_H: X_L\to X_M$ pull Heegner divisors back according to the coset decomposition of $M$ over $L$, and the corresponding isotropic induction maps $\uparrow_H^{A_L}$ span the relevant part of $\QQ[A_L]$. Iterating this procedure, we reduce to finitely many overlattices $M_i$ of K3 type, to which the known Picard rank one results apply. Section~\ref{sec:auxiliary} constructs these overlattices (Corollary~\ref{cor:auxiliary-overlattices}). Proposition~\ref{prop:open-Picard-coverage} then shows that $\Pic(X_L)_\QQ$ is generated by pullbacks of rational Picard classes on the $X_{M_i}$. Proposition~\ref{prop:joint} then combines this generation statement with finite norms and the Hodge index theorem to deduce Picard rank one for $\overline X_L$ from the corresponding results for the $\overline X_{M_i}$.

    To pass to a general congruence quotient \(X_\Gamma\), we choose a finite cover \(X_{aL}\to X_\Gamma\). A line bundle on \(\overline{X}_\Gamma\) is determined, up to torsion, by its pullback to \(\overline{X}_{aL}\). 

\end{itemize}

\subsection{Organization of the paper}
Section~\ref{sec:prelim} fixes the lattice, quotient, and cycle conventions. Section~\ref{sec:comparison} develops the saturation argument and proves the cycle comparison. Section~\ref{sec:auxiliary} constructs the overlattices needed for the Picard theorem. Section~\ref{sec:picard} first proves generation by pullback, then uses finite norms and the Hodge index theorem to complete the proof of Theorem~\ref{thm:picard-main}. 

\subsection*{Origins of the work and disclosure of AI use}

The first question addressed in this paper arose from a letter from Bruinier, who asked whether the gap in the codimension-one argument of \cite[Remark~2.4]{BLMM17} could be filled and whether the equality of spans could be established in every codimension. This led to the comparison theorem proved in Section~\ref{sec:comparison}.

The second question continues our work in \cite{HLMY26}. The idea of using isotropic subgroups of the discriminant form to reduce questions about general lattices to the K3-type case was discussed extensively during that earlier work.

LLM was used in the preparation of this paper. The LLM assisted with the local orbit criterion in Lemma~\ref{lem:local-orbits}, a local analogue of Eichler's criterion. It also suggested the key idea of using a finite family of overlattices whose pullbacks jointly span the rational Picard group, rather than seeking a reduction through a single lattice. This idea is implemented in Proposition~\ref{prop:open-Picard-coverage}; the resulting spanning statement is then used with Proposition~\ref{prop:joint} and our earlier result in \cite{HLMY26}.

The LLM was also used as a writing aid to improve the language and readability of the manuscript. The authors have reviewed and verified all arguments and take full responsibility for the content of the paper.

\subsection*{Conventions}All varieties are over $\CC$; Picard and Chow groups are taken with rational coefficients.

\subsection*{Acknowledgements}The author is grateful to Hanyu Yao for carefully reading an earlier version of this paper and providing useful comments. Zhiyuan Li is supported by NSFC grant 12425105 and Shanghai Pilot Program for Basic Research (No. 21TQ001).

\section{Preliminary}\label{sec:prelim}

We fix the discriminant-form notation and the conventions for cycles on coarse arithmetic quotients. The normalization under change of level will be used in both the comparison theorem and the pullback formula for Heegner divisors.

\subsection{Discriminant forms and even overlattices}

Unless another base ring is specified, a lattice is a finite free $\ZZ$-module $L$ equipped with a nondegenerate symmetric bilinear form $(\, ,\,):L\times L\to\ZZ$ such that $(x,x)\in2\ZZ$ for every $x\in L$. We write
\[
q(x)=\frac{(x,x)}2,\qquad L_{\QQ}=L\otimes_{\ZZ}\QQ,\qquad L_{\RR}=L\otimes_{\ZZ}\RR,
\]
and use the same notation for the extensions of the bilinear and quadratic forms. The signature of $L$ is denoted by $\operatorname{sign}(L)=(b^+(L),b^-(L))$. For each prime $p$, put
\[
L_p=L\otimes_{\ZZ}\ZZ_p,\qquad \widehat L=L\otimes_{\ZZ}\widehat\ZZ.
\]

The dual lattice and discriminant group are
\[
L^\vee=\{x\in L_{\QQ}:(x,L)\subseteq\ZZ\}, \qquad A_L=L^\vee/L.
\]
The finite abelian group $A_L$ carries the discriminant quadratic form and its associated bilinear pairing
\[
\bar{q}_L(\bar x)=q(x)\bmod\ZZ,\qquad \bar{b}_L(\bar x,\bar y)=(x,y)\bmod\ZZ,
\]
where $\bar x=x+L$ and $\bar y=y+L$. We denote by $\rO(A_L)$ the group of automorphisms of $A_L$ preserving $\bar{q}_L$. The \emph{stable orthogonal group}, or \emph{discriminant kernel},
is
\[
\widetilde{\rO}(L)=\ker\bigl(\rO(L)\longrightarrow\rO(A_L)\bigr).
\]

\begin{definition}
    A submodule $N$ of a finite free module $P$ is called \emph{saturated} if $P/N$ is torsion-free. Over $\ZZ$ or a discrete valuation ring this is equivalent to $N$ being a module direct summand of $P$. A nonzero vector is \emph{primitive} if the rank-one submodule it generates is saturated. Saturation and primitivity will always refer to the specified ambient module.
\end{definition}

\begin{definition}

A subgroup $H\subseteq A_L$ is \emph{isotropic} if $\bar{q}_L|_H=0$. Set
\[
H^\perp=\{\alpha\in A_L:\bar{b}_L(\alpha,H)=0\}.
\]
\end{definition}
The assignment
\[
H\longmapsto L_H:=\{x\in L^\vee:x+L\in H\}
\]
gives a bijection between isotropic subgroups of $A_L$ and even overlattices of $L$ contained in $L^\vee$. Moreover,
\begin{equation}\label{eq:overlattice-discriminant}
    L_H/L=H,\qquad L_H^\vee/L=H^\perp,\qquad (A_{L_H},\bar{q}_{L_H})\simeq(H^\perp/H,\bar q_{L,H}),
\end{equation}
where $\bar q_{L,H}$ denotes the quadratic form induced by $\bar{q}_L$; see \cite[Proposition~1.4.1]{Ni79}. We regard $\widetilde{\rO}(L)$ and $\widetilde{\rO}(L_H)$ as subgroups of $\rO(L_{\QQ})$. Every element of $\widetilde{\rO}(L)$ preserves $L_H$ and acts trivially on $H^\perp/H$. Hence
\begin{equation}\label{eq:overlattice-stable-inclusion}
    \widetilde{\rO}(L)\subseteq\widetilde{\rO}(L_H).
\end{equation}

\begin{definition}
    Let $U$ be the even unimodular hyperbolic plane, and put $U_p=U\otimes\ZZ_p$. We say that a lattice of signature $(2,n)$ is \emph{of K3 type} if it splits $U^{\oplus2}$ over $\ZZ$ and $U_p^{\oplus3}$ over $\ZZ_p$ for every prime $p$. This is the splitting condition used in the compactified Picard theorem recalled in Section~\ref{subsec:picard-inputs}.
\end{definition}

\subsection{Orthogonal Shimura varieties}

Let $L$ have signature $(2,n)$, put $V=L\otimes\QQ$, and write $G=\rO(V)$. The type~IV domain
\[
\mathcal D=\{[z]\in\PP(V\otimes\CC):(z,z)=0,\ (z,\bar z)>0\}
\]
has two connected components, each of dimension $n$. Fix a component $\mathcal D^+$ and let $G(\QQ)^+$ be its stabilizer. Set
\[
\rO^+(L)=\rO(L)\cap G(\QQ)^+,\qquad \widetilde{\rO}^{+}(L)=\ker\bigl(\rO^+(L)\longrightarrow\rO(A_L)\bigr),\qquad X_L=\widetilde{\rO}^{+}(L)\backslash\mathcal D^+.
\]
The variety $X_L$ is normal and quasi-projective, with finite quotient singularities. In particular, $X_L$ is $\mathbb Q$-factorial. Its Baily--Borel compactification $\overline X_L$ is normal and projective \cite{BB66}. The boundary is a finite union of points and modular curves, indexed respectively by $\widetilde{\rO}^{+}(L)$-orbits of rational isotropic lines and planes in $V$ \cite[Theorem~5.11]{GHS13}. Hence
\[
\dim(\overline X_L\setminus X_L)\leq1.
\]
The Hodge class $\lambda_L\in\Pic(X_L)_\QQ$ extends to a class $\bar\lambda_L\in\Pic(\overline X_L)_\QQ$; we choose the sign for which $\bar\lambda_L$ is ample. We use the same notation for their first Chern classes.

Let $\AA_f$ denote the finite adeles of $\QQ$. The compact open subgroup associated with $L$ is
\[
K_L=\prod_pK_{L,p},\qquad K_{L,p}=\ker\bigl(\rO(L_p)\longrightarrow\rO(L_p^\vee/L_p)\bigr).
\]
Thus $K_L$ preserves $\widehat L$ and acts trivially on $\widehat{L^\vee}/\widehat L$, and $\widetilde{\rO}^{+}(L)=G(\QQ)^+\cap K_L$. In the adelic description
\begin{equation}\label{eq:adelic-shimura}
    \operatorname{Sh}_K(V)(\CC)=G(\QQ)\backslash\bigl(\mathcal D\times G(\AA_f)/K\bigr),
\end{equation}
$X_L$ is the component represented by $1\in G(\AA_f)$ when $K=K_L$. More generally, for a compact open subgroup $K\subset G(\AA_f)$, write
\begin{equation}\label{eq:adelic-components}
    \Gamma=G(\QQ)^+\cap K,\qquad X_\Gamma=\Gamma\backslash\mathcal D^+.
\end{equation}
We denote its compactification and Hodge classes by $\overline X_\Gamma$, $\lambda_\Gamma$, and $\bar\lambda_\Gamma$. All quotients are understood as coarse varieties, and Picard and Chow groups have rational coefficients.

\subsection{Special cycles and Heegner cycles}\label{subsec:special-heegner}

For $1\leq r\leq n$ and a row tuple $\bfx=(x_1,\ldots,x_r)\in V^r$, put
\[
q(\bfx)=\tfrac12((x_i,x_j))_{i,j}.
\]
For a negative definite matrix $T\in\Sym_r(\QQ)$, let
\[
\Omega_T=\{\bfx\in V^r:q(\bfx)=T\},\qquad \mathcal D^+_{\bfx}=\{[z]\in\mathcal D^+:(z,x_i)=0\text{ for all }i\}.
\]
Each $\mathcal D^+_{\bfx}$ has codimension $r$. If $\Gamma_{\bfx}\subset\Gamma$ is the pointwise stabilizer of $\bfx$, the natural map
\[
\Gamma_{\bfx}\backslash\mathcal D^+_{\bfx}\longrightarrow X_\Gamma
\]
defines a cycle $Z_\Gamma(\bfx)$ by pushforward, with the normalization for non-neat groups specified below.

Let $\cS(V(\AA_f)^r,\QQ)$ denote the space of compactly supported, locally constant rational-valued functions. For a $K$-invariant function $\varphi$ in this space, the associated \emph{special cycle} on $X_\Gamma$ is
\begin{equation}\label{eq:special-cycle-component}
    Z(T,\varphi;K)|_{X_\Gamma}=\sum_{\bfx\in\Gamma\backslash\Omega_T}\varphi(\bfx)\,Z_\Gamma(\bfx).
\end{equation}
Only finitely many terms are nonzero; see \cite[\S~2]{KM90} and \cite[Section~2.6]{BLMM17}.

For $\mu\in A_L^r$, the coset characteristic function $\varphi_\mu=\mathbf1_{\mu+\widehat L^r}$ is $K_L$-invariant. The corresponding \emph{Heegner cycle} is
\begin{equation}\label{eq:special-as-adelic}
    Z_L(T,\mu)=Z(T,\varphi_\mu;K_L)|_{X_L}.
\end{equation}
The coset is independent of the lift of $\mu$ to $(L^\vee)^r$, and its rational points are $\mu+L^r$. For $r=1$, we write $Z_L(t,\mu)$, where $t<0$; this divisor vanishes unless $t\bmod\ZZ=\bar{q}_L(\mu)$.

For completeness, we specify the multiplicities used above. When $\Gamma$ is neat, $Z_\Gamma(\bfx)$ is the pushforward of the fundamental cycle, including the degree onto its image. In general, choose a neat normal subgroup $\Gamma'\subset\Gamma$ of finite index, with quotient map $p:X_{\Gamma'}\to X_\Gamma$, and define
\[
Z_\Gamma(\bfx)=\frac{1}{\deg p}\,p_*\left(\sum_{\gamma\in\Gamma'\backslash\Gamma/\Gamma_{\bfx}}Z_{\Gamma'}(\gamma\bfx)\right).
\]
Here $\deg p$ is the degree of the map of coarse varieties. This definition is independent of $\Gamma'$ \cite[(2.12)]{Kud04}. For special divisors, these multiplicities give the pullback identity
\begin{equation}\label{eq:special-divisor-pullback}
    f^*\bigl(Z(t,\varphi;K)|_{X_\Gamma}\bigr)=Z(t,\varphi;K')|_{X_{\Gamma'}},
\end{equation}
where $K'\subset K$, $\Gamma'=G(\QQ)^+\cap K'$, and $f:X_{\Gamma'}\to X_\Gamma$ is the induced finite map.

When $L$ is fixed, we abbreviate
\[
Z(T,\varphi)=Z(T,\varphi;K_L)|_{X_L},\qquad Z(T,\mu)=Z_L(T,\mu).
\]
Let $\SC^r(X_L)$ and $\Heg^r(X_L)$ be the rational spans of special cycles and Heegner cycles, respectively, in $\CH^r(X_L)_\QQ$. Then
\[
\Heg^r(X_L)\subseteq\SC^r(X_L).
\]
We prove the reverse inclusion in Section~\ref{sec:comparison}.

\section{Schwartz functions and cycle spans}\label{sec:comparison}

In this section, we show that cycles defined by arbitrary $K_L$-invariant Schwartz functions have the same rational span as those defined by the characteristic functions $\mathbf1_{\mu+\widehat L^r}$, indexed by $\mu\in A_L^r$. This is the key ingredient for proving Theorem \ref{thm:comparison-main}.

\subsection{Local orbits of saturated tuples}

Let $R$ be a discrete valuation ring with fraction field $F$ of characteristic different from $2$. An \emph{integral quadratic $R$-lattice} is a finite free $R$-module $L$ with a quadratic form $q:L\to R$ whose polar form
\[
(x,y)=q(x+y)-q(x)-q(y)
\]
is nondegenerate on $L_F=L\otimes_R F$. Put
\[
L^\vee=\{x\in L_F:(x,L)\subseteq R\},\qquad A_L=L^\vee/L,\qquad \bar{q}_L(x+L)=q(x)\bmod R,
\]
and
\[
\widetilde{\rO}(L)=\ker\bigl(\rO(L)\longrightarrow\rO(A_L,\bar{q}_L)\bigr).
\]
The induced discriminant form $\bar{q}_L$ takes values in $F/R$.

The following lemma characterizes the $\widetilde{\rO}(L)$-orbits of embeddings into $L^\vee$ with saturated image.

\begin{lemma}\label{lem:local-orbits}
    Let $(L,q)$ be an integral quadratic $R$-lattice, and let
    \[
    X,Y:R^r\hookrightarrow L^\vee
    \]
    be embeddings with saturated images in $L^\vee$. There exists $g\in\widetilde{\rO}(L)$ satisfying $g\circ X=Y$ if and only if
    \begin{equation}\label{eq:local-orbit-invariants}
        q(Xa)=q(Ya) \quad\text{and}\quad Xa-Ya\in L \qquad\text{for every }a\in R^r.
    \end{equation}
\end{lemma}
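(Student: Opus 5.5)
Necessity is immediate. If $g\in\widetilde{\rO}(L)$ and $g\circ X=Y$, then $q(Ya)=q(gXa)=q(Xa)$. Since $g$ acts trivially on $A_L=L^\vee/L$ and $Xa\in L^\vee$, we also get $Ya-Xa=gXa-Xa\in L$.

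For sufficiency, the plan is to build $g$ step by step from explicit generators of $\widetilde{\rO}(L)$, moving one basis vector at a time. Write $x_i=Xe_i$ and $y_i=Ye_i$. By polarization, \eqref{eq:local-orbit-invariants} gives $q(x_i)=q(y_i)$ and $(x_i,x_j)=(y_i,y_j)$. It also gives $x_i-y_i\in L$ for all $i$.

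I would argue by induction on $r$, with the following inductive step. Suppose $g'\in\widetilde{\rO}(L)$ already carries $x_1,\dots,x_{r-1}$ to $y_1,\dots,y_{r-1}$. Replace $X$ by $g'\circ X$; the hypotheses are preserved because $g'$ fixes $L^\vee/L$ pointwise and preserves $q$. It remains to move $x_r$ to $y_r$ by an element of $\widetilde{\rO}(L)$ that fixes $y_1,\dots,y_{r-1}$. I would look for this element inside the subgroup of $\widetilde{\rO}(L)$ fixing $W=\langle y_1,\dots,y_{r-1}\rangle$ pointwise. Because $W$ is saturated in $L^\vee$, one can choose a complement and express conditions in terms of $W^\perp\cap L$. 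So the problem reduces to a rank-one statement for the vectors $x_r,y_r$, whose difference $u=x_r-y_r$ lies in $L\cap W^\perp$.

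For the rank-one step, the natural first candidate is the reflection $s_u(v)=v-\frac{(v,u)}{q(u)}u$. It satisfies $s_u(x_r)=y_r$ because $q(x_r)=q(y_r)$. It fixes $W$ because $u\perp W$. Moreover $s_u\in\widetilde{\rO}(L)$ exactly when $(v,u)/q(u)\in R$ for all $v\in L^\vee$. Since $u\in L$ and $(L^\vee,L)\subseteq R$, this holds when $q(u)\in R^\times$. More generally it holds when $u$ is primitive in $L$ with $q(u)$ generating the ideal $(u,L^\vee)$.

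When $u$ is not of this form, I would use saturation of $x_r$ in $L^\vee$. It should provide an auxiliary vector $z\in L^\vee$ with $q(z)=q(x_r)$, $z\perp W$, and $z-x_r\in L$, chosen so that both $x_r-z$ and $z-y_r$ define admissible reflections. Then $s_{z-y_r}s_{x_r-z}$ does the job. Where reflections are unavailable, I would try Eichler transvections $E(e,w)$ with $e\in L$ isotropic and $w\in L\cap e^\perp$ instead, since these visibly lie in $\widetilde{\rO}(L)$.

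The main obstacle is this rank-one step: producing the auxiliary vector $z$, or the right transvection, uniformly over all DVRs, including residue characteristic $2$ and lattices $L$ that need not split a hyperbolic plane. If the generator approach gets stuck, the fallback is more structural. Extend the isometry $X(R^r)\to Y(R^r)$ first to the gluing lattice $L+X(R^r)=L+Y(R^r)$ by the identity on $L$. This is well defined exactly because $x_i-y_i\in L$ and the Gram data agree. Then extend the resulting integral isometry to all of $L$, using a Witt-type extension theorem for saturated sublattices of $L^\vee$ over $R$. By construction this extension acts trivially on $A_L$.
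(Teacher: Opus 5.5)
Your necessity argument is fine, but the sufficiency argument has a genuine gap: neither of your two routes produces $g$.

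\textbf{The generator route.} It stops exactly at the step you flag: the auxiliary vector $z$ is only asserted to exist. This step cannot be repaired in general, because $\widetilde{\rO}(L)$ may contain no reflections and no nontrivial Eichler transvections at all.

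Take $R=\ZZ_2$ and $L=\ZZ_2e\oplus\ZZ_2f$ with $q(e)=q(f)=0$ and $(e,f)=2$. Then $L^\vee=\tfrac12L$. The group $\widetilde{\rO}(L)$ consists of the maps $e\mapsto ue$, $f\mapsto u^{-1}f$ with $u\in\ZZ_2^\times$; the swaps violate $(g-1)L^\vee\subseteq L$.
\begin{itemize}
\item All these maps have determinant $1$, so no reflection lies in $\widetilde{\rO}(L)$.
\item The only isotropic vectors of $L$ lie on $\QQ_2e$ or $\QQ_2f$, and these lines are their own orthogonal complements, so every $E(e,w)$ is trivial.
\end{itemize}
Yet for $u\neq1$, the vectors $x=(e+f)/2$ and $y=(ue+u^{-1}f)/2$ are primitive in $L^\vee$, satisfy $q(x)=q(y)=\tfrac12$ and $x-y\in L$. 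They are related by an element of $\widetilde{\rO}(L)$ that is not a product of the generators you propose. Your reduction to a rank-one problem inside the pointwise stabilizer of $W$ is also not justified, since $L^\vee$ need not split orthogonally as $W\oplus(W^\perp\cap L^\vee)$.

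\textbf{The fallback.} It fails outright. An $R$-linear map on $L+X(R^r)$ that is the identity on $L$ is the identity on $L_F$, because $L$ already has full rank. So it cannot send $x_i$ to $y_i\neq x_i$. Concretely, if $\pi$ is a uniformizer, some $\pi^kx_i$ lies in $L$, so well-definedness forces $\pi^ky_i=\pi^kx_i$. Being an isometry would also force $(\ell,y_i-x_i)=0$ for all $\ell\in L$. The hypothesis $x_i-y_i\in L$ gives neither. Elements of $\widetilde{\rO}(L)$ satisfy $(g-1)L^\vee\subseteq L$; they do not fix $L$ pointwise. A bare Witt-type extension for saturated sublattices would not rescue this either. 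It is false integrally, since primitive vectors of equal norm can generate different ideals $(v,L^\vee)$, and it gives no control of the action on $A_L$.

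\textbf{The missing ingredient.} What you need is Kneser's extension theorem \cite[Theorem~1.2.2]{Kit93}, which is what the paper uses. Take $L^\vee$ as the ambient lattice, with submodules $S=X(R^r)$ and $T=Y(R^r)$, and auxiliary submodule $H=L$.
\begin{enumerate}
\item Saturation of $S$ and $T$ in $L^\vee$ makes the restriction maps $L\cong\operatorname{Hom}_R(L^\vee,R)\to\operatorname{Hom}_R(E,R)$, for $E\in\{S,T\}$, surjective.
\item Together with $(\sigma-1)S\subseteq L$, this is exactly the hypothesis under which the isometry $\sigma\colon S\to T$ extends to an isometry $g$ of $L^\vee$ with $(g-1)L^\vee\subseteq L$.
\item Since $g$ preserves $(L^\vee)^\vee=L$, it lies in $\widetilde{\rO}(L)$.
\end{enumerate}
This is where saturation is genuinely used.
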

\begin{proof}
    Necessity is immediate. Conversely, assume \eqref{eq:local-orbit-invariants}. Put $S=X(R^r)$, $T=Y(R^r)$, and define
    \begin{align*}
        \sigma: S & \to T \\
        Xa & \mapsto Ya.
    \end{align*} 
    The assumed identities imply that $\sigma$ is an isometry and that $\sigma(s)-s\in L$ for every $s\in S$. Since $S$ and $T$ are saturated in $L^\vee$, the pairing induces surjections
    \[
    L\xrightarrow{\sim}\operatorname{Hom}_R(L^\vee,R) \twoheadrightarrow\operatorname{Hom}_R(E,R), \qquad E\in\{S,T\}.
    \] 
    Kneser’s extension theorem \cite[Theorem~1.2.2]{Kit93}, applied with 
    \[
    (\widetilde R,U,L,M,N,H)=(F,L_F,L^\vee,S,T,L)
    \]
    in Kitaoka’s notation, gives an isometry $g$ of $L^\vee$ extending $\sigma$ with $(g-1)L^\vee\subseteq L$. Since $g$ also preserves $(L^\vee)^\vee=L$, it belongs to $\widetilde{\mathrm O}(L)$.
\end{proof}

\subsection{Primitive Heegner cycles}

Recall that an independent tuple $\mathbf y=(y_1,\ldots,y_r)\in(L^{\vee})^r$ is \emph{saturated} if
\[
\ZZ y_1+\cdots+\ZZ y_r=L^{\vee}\cap\bigl(\QQ y_1+\cdots+\QQ y_r\bigr).
\]
Equivalently, the associated embedding $\ZZ^r\hookrightarrow L^{\vee}$ has torsion-free cokernel. For $\nu\in A_L^r$, write
\[
\mathcal P_r(L,\nu)=\bigl\{\mathbf y\in\nu+L^r:\mathbf y\text{ is an independent saturated tuple in }L^{\vee}\bigr\}.
\]

We use the cycles $Z_{\widetilde{\rO}^{+}(L)}(\mathbf y)$ defined in Section~\ref{subsec:special-heegner}.

\begin{definition}[{\cite[Definition~4.10]{BZ26}}]\label{def:primitive-special-cycles}
    For $S\in\operatorname{Sym}_r(\QQ)_{<0}$ and $\nu\in A_L^r$, the \emph{primitive Heegner cycle} is
    \[
    Z^{\mathrm{prim}}(S,\nu)=\sum_{\substack{\mathbf y\in\widetilde{\rO}^{+}(L)\backslash\mathcal P_r(L,\nu)\\
    q(\mathbf y)=S}}Z_{\widetilde{\rO}^{+}(L)}(\mathbf y).
    \]
\end{definition}

\begin{remark}
    In general, primitivity does not imply irreducibility. The irreducible components of the support of $Z^{\mathrm{prim}}(S,\nu)$ correspond to the $\widetilde{\rO}^{+}(L)$-orbits of the rational subspaces $\Span_\QQ(y_1,\ldots,y_r)$ occurring in its definition. In particular, the support is irreducible if the defining tuples form a single $\widetilde{\rO}^{+}(L)$-orbit.

    For $r=1$, suppose that $L$ splits $U^{\oplus2}$. By the Eichler criterion \cite[Proposition~3.3(i)]{GHS09}, primitive vectors in $L^\vee$ with the same norm and discriminant class form a single $\widetilde{\rO}^{+}(L)$-orbit. Thus every nonzero primitive Heegner divisor has irreducible support. For $r=2$, an analogous irreducibility result holds under a stronger splitting hypothesis; see \cite[Proposition~4.15]{BZ26}. 
\end{remark}

The following is a direct consequence of \cite[Lemma~4.13]{BZ26}.

\begin{lemma}\label{lem:finite-saturation-inversion}
    For $1\leq r\leq n$, Heegner cycles and primitive Heegner cycles span the same rational subspace of codimension-$r$ algebraic cycles on $X_L$.
\end{lemma}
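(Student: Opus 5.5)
The plan is to prove the two inclusions separately. For each, I will write an explicit finite triangular relation between the families $\{Z(T,\mu)\}$ and $\{Z^{\mathrm{prim}}(S,\nu)\}$, organised by saturation. This is the decomposition-and-inversion scheme of \cite[Lemma~4.13]{BZ26}, and I would reproduce it at the level of cycles.

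\emph{Decomposition step.} Fix $T$ negative definite and $\mu\in A_L^r$. Let $\mathbf x\in\mu+L^r$ with $q(\mathbf x)=T$. Negative definiteness forces $\mathbf x$ to be independent. Put $N=L^\vee\cap\Span_\QQ(x_1,\dots,x_r)$, a saturated rank-$r$ sublattice. Then there is a unique saturated tuple $\mathbf y$ spanning $N$, up to the choice of basis, and a matrix $\gamma\in M_r(\ZZ)\cap\GL_r(\QQ)$ with $\mathbf x=\mathbf y\gamma$. Here $\gamma$ is well defined up to the left action of $\GL_r(\ZZ)$ via the change of basis of $N$.

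Since $\mathcal D^+_{\mathbf x}=\mathcal D^+_{\mathbf y}$ and the pointwise stabilizers agree, we get $Z_{\widetilde{\rO}^{+}(L)}(\mathbf x)=Z_{\widetilde{\rO}^{+}(L)}(\mathbf y)$. Grouping the terms of \eqref{eq:special-cycle-component} by $\gamma\in\GL_r(\ZZ)\backslash(M_r(\ZZ)\cap\GL_r(\QQ))$ then gives
\[
Z(T,\mu)=\sum_{\gamma}\ \sum_{\substack{\nu\in A_L^r\\ \nu\gamma=\mu}} Z^{\mathrm{prim}}\bigl({}^t\gamma^{-1}T\gamma^{-1},\nu\bigr).
\]
Two finiteness facts are needed. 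Only finitely many $\gamma$ contribute, because $|\det\gamma|$ is bounded by the index of $\ZZ\mathbf x$ in $L^\vee\cap\Span_\QQ\mathbf x$, and this index is bounded in terms of $\det T$ and $L$. The inner sum over $\nu$ is finite since $A_L^r$ is finite.

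\emph{Inversion step.} Order the pairs $(S,\nu)$ by $\det$ of the Gram matrix; note that $\det({}^t\gamma^{-1}T\gamma^{-1})=\det T/\det(\gamma)^2$. The term with $\gamma=1$ is exactly $Z^{\mathrm{prim}}(T,\mu)$, and every other term has Gram determinant strictly smaller in absolute value. The values of $|\det S|$ that occur lie in a discrete set, because $S$ has entries in $\tfrac1{2N}\ZZ$ for $N$ the level of $L$. So I can induct on $|\det S|$ to solve
\[
Z^{\mathrm{prim}}(T,\mu)=Z(T,\mu)-\sum_{\gamma\neq1}\cdots,
\]
which expresses each primitive cycle as a finite $\QQ$-combination (in fact a $\ZZ$-combination) of Heegner cycles. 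This gives both inclusions of spans at the level of cycles, hence in $\CH^r(X_L)_\QQ$.

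\emph{Main obstacle.} I expect the bookkeeping in the decomposition step to be the hardest part. One has to check that the $\widetilde{\rO}^{+}(L)$-orbit sum over $\mathbf x$ matches the orbit sum over $\mathbf y$ with the correct multiplicity. The key point is that $\Gamma$ acts on $\mathbf x=\mathbf y\gamma$ and on $\mathbf y$ compatibly, so the stabilizers $\Gamma_{\mathbf x}=\Gamma_{\mathbf y}$ coincide and the orbit map is a bijection for each fixed $\gamma$. The normalisation of $Z_\Gamma(\mathbf x)$ for non-neat $\Gamma$ also has to match, which it does since the definition only depends on $\mathcal D^+_{\mathbf x}$ and $\Gamma_{\mathbf x}$. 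Finally, I would need to confirm that distinct classes of $\gamma$ give distinct $\mathbf x$. This holds because $\mathbf y$ is a basis of $N$.
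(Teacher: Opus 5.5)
Your proposal is correct and follows the same route as the paper. The paper gives no separate proof: it cites the decomposition and inversion formulas of \cite[Lemma~4.13]{BZ26}, and your argument reproduces exactly that scheme, namely the saturation decomposition $\mathbf x=\mathbf y\gamma$ with $\gamma\in\GL_r(\ZZ)\backslash\{\gamma\in M_r(\ZZ):\det\gamma\neq0\}$, followed by triangular inversion on $|\det S|$. The same decomposition appears in the paper as $\mathbf z=\mathbf yA$ in the proof of Theorem~\ref{thm:special-adelic-span}.
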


\subsection{The comparison theorem}

We start with a simple result:

\begin{lemma}\label{lem:schwartz-constant-on-saturated-tuples}
    Let $S\in\Sym_r(\QQ)_{<0}$, $\nu\in A_L^r$, and $C\in M_r(\QQ)$. For every $\varphi\in\cS(V(\AA_f)^r,\QQ)^{K_L}$, the function $\mathbf y\mapsto\varphi(\mathbf yC)$ is constant on
    \[
    \{\mathbf y\in\mathcal P_r(L,\nu):q(\mathbf y)=S\}.
    \]
\end{lemma}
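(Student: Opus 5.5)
The approach is to show that any two tuples in the set are related by an element of $K_L$, and then use the $K_L$-invariance of $\varphi$. Fix $\mathbf y,\mathbf y'\in\mathcal P_r(L,\nu)$ with $q(\mathbf y)=q(\mathbf y')=S$. We will construct $g=(g_p)_p\in K_L=\prod_p K_{L,p}$ with $g\mathbf y=\mathbf y'$. Granting this, since $g$ acts $\QQ$-linearly on $V(\AA_f)$ and $C$ acts on the right on tuples, $g(\mathbf yC)=(g\mathbf y)C=\mathbf y'C$. The $K_L$-invariance of $\varphi$ then gives $\varphi(\mathbf y'C)=\varphi(g\,\mathbf yC)=\varphi(\mathbf yC)$, which is the claim.

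To construct $g_p$, apply Lemma~\ref{lem:local-orbits} with $R=\ZZ_p$ and the lattice $L_p$, for which $L_p^\vee=L^\vee\otimes\ZZ_p$ and $K_{L,p}=\widetilde{\rO}(L_p)$. Define embeddings $X,Y:\ZZ_p^r\hookrightarrow L_p^\vee$ by $a\mapsto\mathbf y a$ and $a\mapsto\mathbf y' a$. Injectivity follows from the independence of the tuples. Saturation is preserved by $\otimes\ZZ_p$, since $L^\vee/(\ZZ y_1+\cdots+\ZZ y_r)$ torsion-free implies the completed quotient is torsion-free ($\ZZ_p$ is flat). It remains to verify the conditions \eqref{eq:local-orbit-invariants}:
\begin{itemize}
\item The equality $q(Xa)=a^{t}Sa=q(Ya)$ holds since both tuples have Gram matrix $S$ and the form extends by continuity to $\ZZ_p$-coefficients.
\item Since $\mathbf y,\mathbf y'\in\nu+L^r$, we have $y_i-y_i'\in L$, hence $Xa-Ya=\sum a_i(y_i-y'_i)\in L_p$.
\end{itemize}
Lemma~\ref{lem:local-orbits} then gives $g_p\in K_{L,p}$ with $g_p\mathbf y=\mathbf y'$.

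The one step needing care is that the componentwise choices assemble into an element of $K_L$. The product $K_L=\prod_p K_{L,p}$ is a restricted product in which every factor is already compact, so any family $(g_p)$ with $g_p\in K_{L,p}$ lies in $K_L$. There is therefore no need for $g_p=1$ at almost all primes, and the main difficulty was already absorbed by Lemma~\ref{lem:local-orbits}. The only other points to check are that $\varphi$ is defined on $V(\AA_f)^r$ with $\mathbf yC$ embedded diagonally, and that $K_L$ acts diagonally. Both are immediate from the definitions, so the remaining verifications are routine.
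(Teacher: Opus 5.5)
Your proof is correct and is the paper's argument: Lemma~\ref{lem:local-orbits} at each prime $p$ gives $k_p\in K_{L,p}$ with $k_p\mathbf y=\mathbf y'$, and $k=(k_p)_p\in K_L$ then yields $\varphi(\mathbf y'C)=\varphi(k(\mathbf yC))=\varphi(\mathbf yC)$. You also spell out checks the paper leaves implicit: saturation survives $\otimes\ZZ_p$ by flatness, and the two local invariants hold. One small correction: $(k_p)_p$ lies in $G(\AA_f)$ because each $K_{L,p}\subseteq\rO(L_p)$, the groups defining the restricted product, not merely because the factors are compact.
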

\begin{proof}
    Let $\mathbf y,\mathbf y'$ belong to this set. At every prime $p$, they define saturated embeddings into $L_p^\vee$ with the same Gram matrix and discriminant classes. Lemma~\ref{lem:local-orbits} gives $k_p\in K_{L,p}$ with $k_p\mathbf y=\mathbf y'$. Hence $k=(k_p)_p\in K_L$ satisfies
    \[
    \varphi(\mathbf y'C)=\varphi\bigl(k(\mathbf yC)\bigr)=\varphi(\mathbf yC).
    \]
\end{proof}

Combining this fact with Lemma~\ref{lem:finite-saturation-inversion}, we obtain the following comparison result.

\begin{theorem}\label{thm:special-adelic-span}
    Let $L$ have signature $(2,n)$. For $1\leq r\leq n$, special cycles and Heegner cycles span the same rational subspace of codimension-$r$ algebraic cycles on $X_L$.
\end{theorem}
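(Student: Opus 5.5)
Since $\Heg^r(X_L)\subseteq\SC^r(X_L)$ is already known, and Lemma~\ref{lem:finite-saturation-inversion} identifies the Heegner span with the span of primitive Heegner cycles, it suffices to show that every special cycle $Z(T,\varphi)$, with $\varphi\in\cS(V(\AA_f)^r,\QQ)^{K_L}$, is a finite rational combination of the cycles $Z^{\mathrm{prim}}(S,\nu)$. We work at the level of cycles, not yet modulo rational equivalence, using the orbit expansion \eqref{eq:special-cycle-component}.

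\emph{Reduction to tuples in $(L^\vee)^r$.} Since $\varphi$ is compactly supported and locally constant, its support lies in $N^{-1}\widehat{L^\vee}{}^r$ for some integer $N\geq1$. Hence only tuples $\bfx\in(N^{-1}L^\vee)^r$ contribute to \eqref{eq:special-cycle-component}. For simplicity, assume first that the support lies in $\widehat{L^\vee}{}^r$. In general one cannot replace $L$ by $NL$, since $K_{NL}\neq K_L$. Instead, note that $\Omega_T\cap(N^{-1}L^\vee)^r$ is $\widetilde{\rO}^{+}(L)$-stable, and run the saturation argument below with $N^{-1}L^\vee$ as the ambient module. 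This yields saturated tuples $\mathbf y\in N^{-1}L^\vee$, and writing $\mathbf y=N^{-1}(N\mathbf y)$ absorbs the factor $N^{-1}$ into the matrix $C$.

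\emph{Decomposition by saturation.} Let $\bfx\in\Omega_T\cap(L^\vee)^r$. Linear independence follows from $T$ being negative definite. Let $W=\Span_\QQ(x_1,\ldots,x_r)$ and choose a $\ZZ$-basis $\mathbf y$ of the saturation $L^\vee\cap W$. Then $\bfx=\mathbf yC$ for a unique $C\in M_r(\ZZ)\cap\GL_r(\QQ)$, determined up to the action $\mathbf y\mapsto\mathbf yg$, $C\mapsto g^{-1}C$ of $\GL_r(\ZZ)$. The geometric cycles agree, $\mathcal D^+_{\bfx}=\mathcal D^+_{\mathbf y}$, and the pointwise stabilizers coincide, $\Gamma_{\bfx}=\Gamma_{\mathbf y}$. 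Hence $Z_{\widetilde{\rO}^+(L)}(\bfx)=Z_{\widetilde{\rO}^+(L)}(\mathbf y)$, with the multiplicity normalization from Section~\ref{subsec:special-heegner} compatible on both sides.

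We then reindex the sum over $\widetilde{\rO}^+(L)\backslash\Omega_T$ as a sum over pairs $(C,\mathbf y)$. Here $C$ runs over a finite set of representatives of $\GL_r(\ZZ)\backslash\{C\in M_r(\ZZ)\cap \GL_r(\QQ): {}^tC\,S\,C=T \text{ for some admissible }S\}$; finiteness holds because $\det C$ is bounded, as $S$ ranges over Gram matrices of saturated tuples in $L^\vee$ with $\det S\mid$ a bound coming from $\det T$. The tuple $\mathbf y$ runs over $\widetilde{\rO}^+(L)\backslash\mathcal P_r(L,\nu)$ with $q(\mathbf y)=S:={}^tC^{-1}TC^{-1}$ and $\nu\in A_L^r$. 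This gives
\[
Z(T,\varphi)=\sum_{C}\sum_{\nu\in A_L^r}\ \sum_{\substack{\mathbf y\in\widetilde{\rO}^{+}(L)\backslash\mathcal P_r(L,\nu)\\ q(\mathbf y)=S_C}}\varphi(\mathbf yC)\,Z_{\widetilde{\rO}^{+}(L)}(\mathbf y).
\]
By Lemma~\ref{lem:schwartz-constant-on-saturated-tuples}, $\varphi(\mathbf yC)$ is constant on each inner set, with value $c(C,\nu)$ say. Therefore
\[
Z(T,\varphi)=\sum_{C,\nu}c(C,\nu)\,Z^{\mathrm{prim}}(S_C,\nu),
\]
a finite rational combination of primitive Heegner cycles.

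\emph{Main obstacle.} The step I expect to require the most care is the bookkeeping in this reindexing. One must check that the map $\bfx\mapsto(\mathbf y,C)$ is a bijection on $\widetilde{\rO}^+(L)$-orbits modulo $\GL_r(\ZZ)$, so that no orbit is double counted when $\mathbf y g$ and $\mathbf y$ lie in the same orbit. One must also check that the non-neat normalization of $Z_\Gamma(\bfx)$ agrees with that of $Z_\Gamma(\mathbf y)$. My first approach is to fix the representative $C$ in Hermite normal form, which makes the decomposition $\bfx=\mathbf yC$ canonical. I would then verify the stabilizer equality directly, which is clear because $\bfx$ and $\mathbf y$ span the same $\QQ$-space by an invertible matrix.
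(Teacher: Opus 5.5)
Your proposal is correct and is essentially the paper's proof. You decompose each contributing tuple through the saturation of its span in $L^\vee$; your ambient module $N^{-1}L^\vee$ is just the paper's rescaling $N\bfx=\mathbf yA$ with $\mathbf y$ saturated in $L^\vee$, and the factor $N^{-1}$ is absorbed into the matrix in Lemma~\ref{lem:schwartz-constant-on-saturated-tuples}. You then fix $\GL_r(\ZZ)$-coset representatives so that the decomposition is unique and $\widetilde{\rO}^{+}(L)$-equivariant, and regroup into the cycles $Z^{\mathrm{prim}}(S,\nu)$, exactly as the paper does.

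Your finiteness phrase is garbled, but the intended bound is the paper's. If $c$ is a positive integer with $cq(\mathbf y)$ integral for all $\mathbf y\in(L^\vee)^r$, then $1\leq\det(-cS)$ for the Gram matrix $S$ of the saturated tuple, whence $(\det A)^2\leq c^rN^{2r}\det(-T)$.
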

\begin{proof}
    It suffices to express $Z(T,\varphi)$ as a rational linear combination of Heegner cycles. Choose a positive integer $N$ such that
    \[
    \Supp(\varphi)\subset N^{-1}(\widehat{L^\vee})^r.
    \]
    Then $N\bfx\in(L^\vee)^r$ whenever $\bfx\in V^r$ and $\varphi(\bfx)\neq0$.

    Choose representatives $\mathcal R_r$ for
    \[
    \GL_r(\ZZ)\backslash\{A\in M_r(\ZZ):\det A\neq0\}.
    \]
    Every independent tuple $\mathbf z\in(L^\vee)^r$ has a unique expression $\mathbf z=\mathbf yA$, where $A\in\mathcal R_r$ and $\mathbf y$ is a basis of
    \[
    L^\vee\cap\Span_\QQ(z_1,\ldots,z_r).
    \]
    Indeed, changing this basis multiplies $A$ on the left by an element of $\GL_r(\ZZ)$, and fixing $A$ determines $\mathbf y=\mathbf zA^{-1}$. The matrix $A$ records the inclusion of the lattice generated by $\mathbf z$ into its saturation; in particular,
    \[
    |\det A|=\bigl[L^\vee\cap\Span_\QQ(z_1,\ldots,z_r):\ZZ z_1+\cdots+\ZZ z_r\bigr].
    \]

    For $\mathbf z=N\bfx$ with $q(\bfx)=T$, we obtain
    \[
    \bfx=N^{-1}\mathbf yA,\qquad q(\mathbf y)=S_A:=A^{-t}(N^2T)A^{-1},
    \]
    where $A^{-t}=(A^{-1})^t$. To bound these indices, choose a positive integer $c$ such that $cq(\mathbf y)$ has integral entries for all $\mathbf y\in(L^\vee)^r$. If a tuple with Gram matrix $S_A$ exists, then
    \[
    1\leq\det(-cS_A)=\frac{c^rN^{2r}\det(-T)}{(\det A)^2}.
    \]
    Thus $|\det A|$ is bounded, and hence only finitely many representatives $A\in\mathcal R_r$ occur.

    For every $A \in \cR_r$ and $\nu\in A_L^r$, set
    \[
    b_{A,\nu}=\varphi(N^{-1}\mathbf yA),\qquad \mathbf y\in\mathcal P_r(L,\nu),\quad q(\mathbf y)=S_A,
    \]
    and put $b_{A,\nu}=0$ if no such tuple exists. By Lemma~\ref{lem:schwartz-constant-on-saturated-tuples}, this coefficient is independent of $\mathbf y$. The tuples $\bfx$ and $\mathbf y$ have the same orthogonal domain and pointwise stabilizer. Substitution in \eqref{eq:special-cycle-component} therefore gives
    \[
    Z(T,\varphi)=\sum_{A\in\mathcal R_r}\ \sum_{\nu\in A_L^r} b_{A,\nu}\,Z^{\mathrm{prim}}(S_A,\nu),
    \]
    with only finitely many nonzero terms. Lemma~\ref{lem:finite-saturation-inversion} expresses each primitive Heegner cycle as a rational linear combination of Heegner cycles, proving the assertion.
\end{proof}

In particular, this proves Theorem~\ref{thm:comparison-main} in the Chow group.

\section{Auxiliary overlattices}\label{sec:auxiliary}

We construct finitely many overlattices of K3 type whose isotropic induction maps span $\QQ[A_L]$. Their Heegner divisors will generate $\Pic(X_L)_\QQ$ by pullback in Section~\ref{sec:picard}. This construction reduces Theorem~\ref{thm:picard-main} to lattices of K3 type.

\subsection{Isotropic induction}

For a finite nondegenerate quadratic module $(A,\bar{q})$ with polar pairing $\bar{b}$, set $A[p]=\{x\in A:px=0\}$ and $\ell_p(A)=\dim_{\mathbb F_p}A[p]$; put $\ell(A)=\max_p\ell_p(A)$. For an isotropic subgroup $H\subset A$, write $A_H=H^\perp/H$ and define
\[
\uparrow_H^A:\QQ[A_H]\longrightarrow\QQ[A],\qquad \mathbf e_{\mu+H}\longmapsto\sum_{h\in H}\mathbf e_{\mu+h}.
\]
For $a\in\QQ/\ZZ$, let $\QQ[A]_a$ be spanned by the $\mathbf e_\mu$ with $\bar{q}(\mu)=a$. Isotropic induction preserves these subspaces. If $J\subset H$ are isotropic, then 
\begin{equation}\label{eq:induction-transitivity}
    (A_J)_{H/J}\simeq A_H,\qquad \uparrow_H^A=\uparrow_J^A\circ\uparrow_{H/J}^{A_J}.
\end{equation}

\begin{lemma}\label{lem:finiteinduction}
    Fix a prime $p$. Suppose that either $p$ is odd and $\ell_p(A)\geq6$, or $p=2$ and $\ell_2(A)\geq7$, or $p=2$, $\ell_2(A)=6$, and $\bar{q}(A[2])\subset\{0,1/2\}$. Then, for every $a\in\QQ/\ZZ$,
    \begin{equation}\label{eq:finite-induction-spanning}
        \QQ[A]_a=\sum_{\substack{H\subset A\text{ isotropic}\\|H|=p}} \uparrow_H^A\bigl(\QQ[A_H]_a\bigr).
    \end{equation}
\end{lemma}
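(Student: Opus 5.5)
We argue by duality. Fix $a$. The right-hand side of \eqref{eq:finite-induction-spanning} is a subspace $W_a\subseteq\QQ[A]_a$, and it suffices to show that every linear functional $f:\QQ[A]_a\to\QQ$ vanishing on $W_a$ is zero. Unwinding the definition of $\uparrow_H^A$, the vanishing of $f$ on $W_a$ means exactly
\[
\sum_{k=0}^{p-1}f(\nu+kh)=0
\]
for every $h\in A$ with $\bar q(h)=0$ and $ph=0$, $h\neq0$, and every $\nu\in\langle h\rangle^\perp$ with $\bar q(\nu)=a$. (For $k\neq 0$ we have $\bar q(\nu+kh)=\bar q(\nu)+k\bar b(\nu,h)+k^2\bar q(h)=a$, so the relation stays in $\QQ[A]_a$.) So we must show that these ``line relations'' force $f\equiv0$.

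\emph{Step 1: reduction to the $p$-primary part.} Write $A=A_p\oplus A'$ with $A'$ of order prime to $p$. Every order-$p$ isotropic $H$ lies in $A_p$, and $H^\perp=H^{\perp_{A_p}}\oplus A'$. Fix $\nu'\in A'$ and restrict $f$ to $\{\nu_p+\nu':\bar q(\nu_p)=a-\bar q(\nu')\}$. This reduces the problem to $A=A_p$ with an arbitrary target value $b\in\QQ/\ZZ$. The hypotheses on $\ell_p$ only concern $A_p$.

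\emph{Step 2: averaging the relations.} Fix $\nu$ with $\bar q(\nu)=b$. Let $I_\nu$ be the set of isotropic lines $H\subset A[p]$ orthogonal to $\nu$, and sum the relation over all $H\in I_\nu$. This gives
\[
|I_\nu|\,f(\nu)+\sum_{0\neq w}c_\nu(w)\,f(\nu+w)=0,
\]
where $w$ runs over nonzero isotropic vectors of $A[p]$ orthogonal to $\nu$, each with multiplicity $c_\nu(w)=1$. I would iterate this once more: apply the relation again at each point $\nu+w$, and subtract. The goal is an operator identity $(|I|\cdot\mathrm{Id}-\mathcal{A})f=0$, where $\mathcal A$ is the adjacency operator of the graph on $\{\bar q=b\}$ in which points are joined when they differ by a nonzero isotropic vector of $A[p]$ orthogonal to both. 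Injectivity then follows once $|I_\nu|$ strictly exceeds every eigenvalue of $\mathcal A$ in absolute value on the relevant space.

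These eigenvalues can be computed by Fourier analysis on $A[p]$ (translation by $A[p]$ preserves fibres of $\bar q$ up to a linear term). They are Gauss/Kloosterman-type sums over the quadric $\{\bar q=0\}$ in $A[p]$, restricted to hyperplanes. The trivial character contributes $|I_\nu|$ itself and must be handled separately. Here one uses that $f$ restricted to a full $A[p]$-coset with fixed $\bar q$ already satisfies the relations on enough lines to force a zero mean.

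\emph{Main obstacle.} The hard part is the eigenvalue bound, i.e.\ the quantitative count. We need the number of isotropic lines of the form restricted to $A[p]\cap\nu^\perp$ to dominate the square-root-size character sums uniformly in $\nu$. This includes $\nu$ lying in the radical of the form on $A[p]$: the part of $A[p]$ coming from cyclic summands of order $\ge p^2$ is totally isotropic and orthogonal to more, which actually helps. A nondegenerate quadric in dimension $m$ over $\mathbb F_p$ has about $p^{m-1}$ points with error $O(p^{m/2})$. After cutting by $\nu^\perp$ and demanding that the second-step relation also lands in the right fibre, one needs roughly $m-2\ge 4$ effective dimensions for the main term to win; this is where $\ell_p\ge6$ enters. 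For $p=2$ the form on $A[2]$ is only a quadratic refinement with values in $\tfrac14\ZZ/\ZZ$, not necessarily $\{0,\tfrac12\}$, and the linear-term bookkeeping costs one extra dimension. Hence we require $\ell_2\ge7$, or $\ell_2=6$ when $\bar q(A[2])\subset\{0,1/2\}$, so that it is an honest $\mathbb F_2$-quadratic form. I would first carry out the odd-$p$ count carefully in the nondegenerate case, then treat the degenerate radical directly (there, lines inside the radical give a much simpler transitivity argument), and finally adapt to $p=2$.
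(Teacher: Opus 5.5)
You have a genuine gap. Your duality reformulation is correct, and Step~1 is a valid dual form of the paper's final step (tensoring with the prime-to-$p$ part and taking the value-$a$ subspace). The trouble is everything after that. The assertion that the line relations force $f=0$ on each fibre of $\bar q$ in $A_p$ \emph{is} the lemma, and you only describe a strategy whose decisive estimate is never stated precisely, let alone proved. The sketch also does not work as written:
\begin{enumerate}[label=\textup{(\roman*)}]
\item $|I_\nu|$ need not be constant on a connected component of your graph. For $\nu=\nu_0+x$ with $x\in A[p]$, the functional $\bar b(\nu,\cdot)|_{A[p]}$ varies with $x$, so $\nu^\perp\cap A[p]$ can be all of $A[p]$, a tangent hyperplane, or a nondegenerate one. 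Already on the cone $\bar q=0$ of a nondegenerate $A=A[p]$, the vertex lies on more isotropic lines than the other points. So averaging gives $(D+\mathcal A)f=0$ with $D$ diagonal and non-scalar; what you need is nonsingularity of $NN^{t}$ for the point--line incidence matrix $N$. Summing all relations yields only $\sum_\nu|I_\nu|f(\nu)=0$, not a zero mean.
\item Characters of $A[p]$ are not eigenfunctions of $\mathcal A$. Neither the fibre nor the adjacency condition $\bar b(\nu,w)=0$ is translation invariant, so ``Fourier analysis on $A[p]$'' does not produce the spectrum.
\item You treat only a nondegenerate $A[p]$ and a totally isotropic radical. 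In general $A[p]$ is the $p^1$-constituent (of any rank $k\le\ell_p(A)$) plus the radical $A[p]\cap pA$. Inside $\nu_0+A[p]$ the fibre is a quadric with a linear term $\bar b(\nu_0,\cdot)$ that may be nonzero on the radical. For $p=2$ the radical is not even isotropic: $\ZZ/4\ZZ$ with $\bar q(1)=1/8$ has $\bar q(2)=1/2$.
\item Most seriously, a count of the form ``main term versus $O(p^{m/2})$ error'' cannot reach the thresholds of the lemma. These are exact and include $p=3$ with $\ell_3(A)=6$, and $p=2$ with $\ell_2(A)\in\{6,7\}$, where even the value set $\bar q(A[2])$ enters the hypothesis. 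You would need exact spectral data in every Jordan configuration.
\end{enumerate}

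The paper bypasses all of this by quoting a classification. By \cite[Theorems~4.10 and~4.18]{Mu24}, the $p$-primary modules $A_p$ for which $\CC[A_p]$ is not spanned by isotropic inductions have length at most $5$ for odd $p$ and at most $6$ for $p=2$. Every $2$-primary exception of length $6$ contains an order-$2$ element of norm $1/4$ or $3/4$, so the hypotheses of the lemma rule all exceptions out. The paper then uses \cite[Lemma~4.4]{Mu24} to restrict to $|H|=p$, descends surjectivity from $\CC$ to $\QQ$ because the induction maps are defined over $\QQ$, and tensors with $\QQ[A^p]$. Completing your route would amount to reproving that classification by an exact rank computation for these line--point incidence systems.
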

\begin{proof}
    Write $A=A_p\perp A^p$, where $A_p$ is the $p$-primary component. By \cite[Theorems~4.10 and~4.18]{Mu24}, the hypotheses imply that $\CC[A_p]$ is spanned by isotropic inductions. Indeed, the odd-primary exceptions have length at most five; the $2$-primary exceptions have length at most six, and every exception of length six contains an element of order two with quadratic value $1/4$ or $3/4$.

    By \cite[Lemma~4.4]{Mu24}, we may restrict to isotropic subgroups of order $p$. Since the induction maps are defined over $\QQ$, surjectivity after extension of scalars gives
    \[
    \QQ[A_p]=\sum_{\substack{H\subset A_p\ \mathrm{isotropic}\\ |H|=p}}\uparrow_H^{A_p}\bigl(\QQ[(A_p)_H]\bigr).
    \]
    For each such $H$, we have $A_H=(A_p)_H\perp A^p$ and, under the natural tensor-product identifications,
    \[
    \uparrow_H^A=\uparrow_H^{A_p}\otimes\operatorname{id}_{\QQ[A^p]}.
    \]
    Tensoring with $\QQ[A^p]$ and taking the subspace of quadratic value $a$ gives \eqref{eq:finite-induction-spanning}, since isotropic induction preserves quadratic value.
\end{proof}
\subsection{Isotropic quotients}

Repeated application of Lemma~\ref{lem:finiteinduction} gives the following decomposition.

\begin{proposition}\label{prop:leaves}
    Let $A$ be a finite nondegenerate quadratic module. There exist finitely many isotropic subgroups $H_i\subset A$ such that $C_i=H_i^\perp/H_i$ satisfies
    \begin{enumerate}[label=\textup{(\arabic*)}]
        \item $\ell_p(C_i)\leq5$ for every odd prime $p$, and $\ell_2(C_i)\leq6$;
        \item if $\ell_2(C_i)=6$, then $\bar{q}_{C_i}(z)\in\{1/4,3/4\}$ for some $z\in C_i[2]$.
    \end{enumerate}
    Moreover, for every $a\in\QQ/\ZZ$,
    \begin{equation}\label{eq:terminal-spanning}
        \QQ[A]_a=\sum_i\uparrow_{H_i}^A\bigl(\QQ[C_i]_a\bigr).
    \end{equation}
\end{proposition}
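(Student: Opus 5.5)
We argue by induction on $|A|$, building a finite rooted tree of isotropic subgroups and applying Lemma~\ref{lem:finiteinduction} at each non-terminal node. Call $A$ \emph{terminal} if it satisfies conditions (1) and (2) of the statement. If $A$ is terminal, we take the single subgroup $H=0$. Then $C=A$ and $\uparrow_0^A=\mathrm{id}$, so \eqref{eq:terminal-spanning} holds trivially.

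If $A$ is not terminal, then (1) or (2) fails, and we check that some prime $p$ satisfies the hypotheses of Lemma~\ref{lem:finiteinduction}:
\begin{itemize}
    \item if $\ell_p(A)\geq6$ for some odd $p$, that $p$ works;
    \item if $\ell_2(A)\geq7$, then $p=2$ works;
    \item if $\ell_2(A)=6$ and no $z\in A[2]$ has $\bar q(z)\in\{1/4,3/4\}$, then $\bar q(A[2])\subset\{0,1/2\}$, because $2\bar q(z)=\bar b(z,z)=0$ forces $\bar q(z)\in\tfrac14\ZZ/\ZZ$; this is the third case of the lemma with $p=2$.
\end{itemize}
Lemma~\ref{lem:finiteinduction} then gives
\[
\QQ[A]_a=\sum_{|H|=p}\uparrow_H^A\bigl(\QQ[A_H]_a\bigr),
\]
where $H$ runs over the finitely many isotropic subgroups of order $p$. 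Each $A_H=H^\perp/H$ is again a finite nondegenerate quadratic module with $|A_H|=|A|/p^2<|A|$.

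By the induction hypothesis, each $A_H$ has a finite family of isotropic subgroups $H'_j\subset A_H$ with terminal quotients $C_j=(A_H)_{H'_j}$ and
\[
\QQ[A_H]_a=\sum_j\uparrow_{H'_j}^{A_H}\bigl(\QQ[C_j]_a\bigr).
\]
Let $H_j\subset H^\perp$ be the preimage of $H'_j$. It contains $H$ and is isotropic in $A$, since $\bar q$ on $H^\perp$ descends to $A_H$. By \eqref{eq:induction-transitivity}, $A_{H_j}\simeq(A_H)_{H_j/H}=C_j$ and $\uparrow_{H_j}^A=\uparrow_H^A\circ\uparrow_{H_j/H}^{A_H}$. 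Substituting into the display above gives \eqref{eq:terminal-spanning} for $A$, with the finite family of all such $H_j$ as $H$ varies. This completes the induction.

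Most steps are bookkeeping. The main point to get right is that the transitivity identification $(A_J)_{H/J}\simeq A_H$ is compatible with the quadratic forms, so that the terminal conditions, which are stated for $C_i=H_i^\perp/H_i$, are really conditions on the leaves of the tree. A secondary point is the case analysis above, which shows that the negation of terminality lands exactly in the hypotheses of Lemma~\ref{lem:finiteinduction}; the only subtle piece is the $2$-torsion observation $\bar q(A[2])\subset\tfrac14\ZZ/\ZZ$.
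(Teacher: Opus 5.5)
Your proposal is correct and follows the paper's own proof essentially step for step: induction on $|A|$, an application of Lemma~\ref{lem:finiteinduction} at a prime where terminality fails, lifting isotropic subgroups of $A_H$ to isotropic subgroups of $A$ containing $H$, and substituting via \eqref{eq:induction-transitivity}. One small slip in your case analysis: for $z\in A[2]$ you only have $2\bar b(z,z)=0$, not $\bar b(z,z)=0$ (for instance $\bar b(z,z)=1/2$ when $\bar q(z)=1/4$); the correct justification for $\bar q(z)\in\tfrac14\ZZ/\ZZ$ is $4\bar q(z)=\bar q(2z)=0$.
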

\begin{proof}
    We argue by induction on $|A|$. If $A$ satisfies (1) and (2), take $H_1=0$. Otherwise, Lemma~\ref{lem:finiteinduction} applies at some prime $p$ and gives
    \[
    \QQ[A]_a=\sum_{\substack{J\subset A\text{ isotropic}\\|J|=p}} \uparrow_J^A\bigl(\QQ[A_J]_a\bigr).
    \]
    Since $|A_J|=|A|/p^2$, the induction hypothesis applies to each $A_J$. An isotropic subgroup of $A_J=J^\perp/J$ is of the form $H/J$ for an isotropic subgroup $H\subset A$ containing $J$. For such an $H$, the quotient $(A_J)_{H/J}$ is isomorphic to $A_H$, and \eqref{eq:induction-transitivity} identifies the composite induction map with $\uparrow_H^A$. Substitution gives \eqref{eq:terminal-spanning}. The resulting family is finite and independent of $a$.
\end{proof}

\subsection{Lattices of K3 type}

We now show that the conditions in Proposition~\ref{prop:leaves} imply the required hyperbolic splittings.

\begin{proposition}\label{prop:splitting}
    Suppose $M$ has signature $(2,n)$ with $n\geq10$, and $A_M$ satisfies conditions (1) and (2) of Proposition~\ref{prop:leaves}. Then $M$ is of K3 type.
\end{proposition}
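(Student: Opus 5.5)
We verify the two splitting conditions separately, using Nikulin's existence and uniqueness results \cite{Ni79} for even lattices with prescribed signature and discriminant form. Write $\ell=\ell(A_M)$, so that $\mathrm{rank}\,M=n+2\geq 12$.

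\emph{Local splitting of $U_p^{\oplus3}$.} Fix a prime $p$. The $\ZZ_p$-lattice $M_p$ has a Jordan decomposition $M_p=M_p^{(0)}\perp M_p'$, where $M_p^{(0)}$ is unimodular. The rank of the non-unimodular part is $\ell_p(A_M)$, so
\[
\mathrm{rank}\,M_p^{(0)}=n+2-\ell_p(A_M).
\]
\begin{itemize}
\item For $p$ odd, condition (1) gives $\ell_p\leq5$, hence $\mathrm{rank}\,M_p^{(0)}\geq7$. A unimodular $\ZZ_p$-lattice of rank at least $7$ splits off $U_p^{\oplus3}$, since every unimodular lattice of rank at least $3$ is isotropic and an isotropic unimodular lattice splits off $U_p$.
\item For $p=2$ with $\ell_2\leq5$, we have $\mathrm{rank}\,M_2^{(0)}\geq7$. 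An even unimodular $\ZZ_2$-lattice is a sum of copies of $U_2$ and $V_2$, and $V_2\perp V_2\simeq U_2^{\oplus2}$. Hence rank at least $6$ already gives $U_2^{\oplus3}$, because the rank is even.
\item For $p=2$ with $\ell_2=6$, the unimodular part has rank only $\geq6$, and it need not be even. This is where condition (2) enters. An element $z\in A_M[2]$ with $\bar q(z)\in\{1/4,3/4\}$ forces the $2$-adic Jordan component of scale $2$ to be odd, since an even scale-$2$ component gives values in $\{0,1/2\}$ on $A[2]$. By the parity compatibility of adjacent Jordan components in Conway--Sloane/Nikulin normal form, the unimodular component $M_2^{(0)}$ is then even. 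Being even unimodular of rank $\geq6$, it splits off $U_2^{\oplus3}$.
\end{itemize}
I expect this $p=2$, $\ell_2=6$ case to be the main obstacle: the parity bookkeeping of $2$-adic Jordan components must be checked carefully.

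\emph{Global splitting of $U^{\oplus2}$.} Since $\ell(A_M)\leq6\leq n+2-6$, we have $\mathrm{rank}\,M\geq \ell(A_M)+6$. Nikulin's criterion \cite[Corollary~1.13.5]{Ni79}, which requires $\mathrm{rank}\geq\ell+5$ for an indefinite even lattice, then gives
\[
M\simeq U\perp M_1.
\]
Here $M_1$ has signature $(1,n-1)$ and $A_{M_1}\simeq A_M$, so $\mathrm{rank}\,M_1=n+1\geq \ell(A_{M_1})+5$. Applying the same criterion to $M_1$ yields $M_1\simeq U\perp M_2$. Hence $M$ splits $U^{\oplus2}$ over $\ZZ$. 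Together with the local splittings, this shows that $M$ is of K3 type.
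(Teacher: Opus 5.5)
\textbf{The gap.} Your argument fails in exactly the case you flagged: $p=2$, $\ell_2(A_M)=6$ and $n=10$, where $\operatorname{rk}M_2^{(0)}=6$.

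First, the unimodular Jordan component of an even lattice is always even, because it is a sublattice of $M_2$. So the remark that it ``need not be even,'' and the appeal to parity compatibility of adjacent components, extract nothing from condition (2).

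Second, the claim that an even unimodular $\ZZ_2$-lattice of rank $\geq 6$ splits off $U_2^{\oplus3}$ is false. By your own classification, such a lattice of rank $6$ is either $U_2^{\oplus3}$ or $U_2^{\oplus2}\perp V_2$. These are not isomorphic: the determinants are $-1$ and $3$, which differ by the non-square $-3\in\ZZ_2^\times$.

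As written, your argument never really uses (2), so it would equally ``prove'' that $U_2^{\oplus2}\perp V_2\perp U_2(2)^{\oplus3}$ splits off $U_2^{\oplus3}$. That is false. If this lattice were $U_2^{\oplus3}\perp K$, the discriminant form would force $K\simeq U_2(2)^{\oplus3}$, and the determinants $-192$ and $64$ again differ by $-3$.

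The missing idea is that the odd scale-$2$ summand $\langle 2u\rangle$, $u\in\ZZ_2^\times$, which you correctly extract from (2), absorbs the anisotropic plane: $V_2\perp\langle 2u\rangle\simeq U_2\perp\langle -6u\rangle$. To see this, solve $x^2+xy+y^2=-u$ using surjectivity of the norm on units of the unramified quadratic extension. Then $v=xe+yf+g$ is isotropic with $(v,V_2)=\ZZ_2$, so it spans a hyperbolic plane together with a suitable isotropic partner. Applied to $M_2^{(0)}\simeq U_2^{\oplus2}\perp V_2$ plus the $\langle 2u\rangle$ in the scale-$2$ component, this yields $U_2^{\oplus3}$.

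This is Conway--Sloane sign walking. It is exactly what the paper obtains from the last case of Nikulin's Corollary~1.9.3: $\langle z\rangle$ is a nondegenerate orthogonal summand of $A_{M_2}$ with $\bar q_M(z)\in\{1/4,3/4\}$, so $M_2\simeq U_2^{\oplus t}\perp K_2$ with $\operatorname{rk}K_2=\ell_2(A_M)=6$, which forces $t=3$.

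There are also smaller slips, none fatal:
\begin{itemize}
\item In the case $\ell_2\leq 5$ you need unimodular rank $\geq 8$, not $\geq 6$. This does hold, since the rank is $\geq 7$ and even.
\item In the global step, Nikulin's Corollary~1.13.5 needs only $\operatorname{rk}\geq \ell+3$ with both signature entries positive, and $\operatorname{rk}M_1=n$, not $n+1$. With your stated bound $\ell+5$, the second application would fail for $n=10$, $\ell=6$. With the correct bound it goes through, exactly as in the paper.
\end{itemize}
Your odd-$p$ argument via the Jordan decomposition is fine and is simply an unpacking of the paper's appeal to Corollary~1.9.3.
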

\begin{proof}
    The length bounds give $\ell(A_M)\leq6$. By \cite[Corollary~1.13.5]{Ni79}, the lattice $M$ splits off $U$. Its orthogonal complement has signature $(1,n-1)$, rank $n\geq\ell(A_M)+3$, and the same discriminant form. Applying the same result again, we obtain a splitting of $2U$.

    It remains to show that $M_p$ splits off $3U_p$ for every prime $p$. Put $r=n+2$ and $a_p=r-\ell_p(A_M)$. The local classification in \cite[Corollary~1.9.3]{Ni79} implies that an even $\ZZ_p$-lattice $K$ splits off $U_p$ whenever
    \[
    \operatorname{rk}(K)-\ell_p(A_K)\geq3.
    \]
    Since splitting off $U_p$ preserves the discriminant form, three successive applications give $3U_p$ whenever $a_p\geq7$. For odd $p$, this follows from $a_p\geq r-5\geq7$.

    At $p=2$, the integer $a_2$ is the even rank of the unimodular Jordan component. Thus $a_2\geq8$ unless $r=12$ and $\ell_2(A_M)=6$.

    In this remaining case, choose $z\in A_M[2]$ with $\bar{q}_M(z)\in\{1/4,3/4\}$, by condition (2). Since
    \[
    \bar{b}_M(z,z)=\bar{q}_M(2z)-2\bar{q}_M(z)=1/2,
    \]
    the subgroup $\langle z\rangle$ is nondegenerate and splits off orthogonally from $A_{M_2}$. Thus the last case of \cite[Corollary~1.9.3]{Ni79} applies and gives
    \[
    M_2\simeq U_2^{\oplus t}\perp K_2
    \]
    for some even $\ZZ_2$-lattice $K_2$ with $\operatorname{rk}(K_2)=\ell_2(A_M)=6$. Comparing ranks gives $t=3$.
\end{proof}

Combining Propositions~\ref{prop:leaves} and~\ref{prop:splitting} with the correspondence between isotropic subgroups of $A_L$ and even overlattices of $L$ gives the following corollary.

\begin{corollary}\label{cor:auxiliary-overlattices}
	Let $L$ have signature $(2,n)$ with $n\geq10$. There exist finitely many even overlattices
	\[
    L\subset M_i\subset L^\vee,\qquad 1\leq i\leq m,
	\]
	such that:
	\begin{enumerate}[label=\textup{(\arabic*)}]
        \item Each $M_i$ is of K3 type.
        \item Writing $H_i=M_i/L\subset A_L$ for the corresponding isotropic subgroup, we have
        \[
        \QQ[A_L]_a = \sum_{i=1}^m \uparrow_{H_i}^{A_L}\bigl(\QQ[A_{M_i}]_a\bigr) \qquad(a\in\QQ/\ZZ).
        \]
	\end{enumerate}
\end{corollary}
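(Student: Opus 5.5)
The plan is to apply Proposition~\ref{prop:leaves} to the discriminant form $A=A_L$ and then translate its output back to lattices. Proposition~\ref{prop:leaves} gives finitely many isotropic subgroups $H_i\subset A_L$, $1\leq i\leq m$, whose quotients $C_i=H_i^\perp/H_i$ satisfy the length conditions (1) and (2) there. It also gives the spanning identity \eqref{eq:terminal-spanning} for every $a\in\QQ/\ZZ$. We then set $M_i=L_{H_i}=\{x\in L^\vee: x+L\in H_i\}$. By the bijection between isotropic subgroups of $A_L$ and even overlattices of $L$ inside $L^\vee$, each $M_i$ is an even lattice with $L\subset M_i\subset L^\vee$ and $M_i/L=H_i$. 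So the subgroup attached to $M_i$ in the corollary is exactly $H_i$.

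Next we identify the discriminant forms. By \eqref{eq:overlattice-discriminant}, $(A_{M_i},\bar q_{M_i})\simeq(H_i^\perp/H_i,\bar q_{L,H_i})=C_i$ as quadratic modules. This isometry is the one used in defining $\uparrow_{H_i}^{A_L}$: the class of $\mu\in H_i^\perp\subset A_L$ in $A_{M_i}=M_i^\vee/M_i$ is $\mu+H_i$. It also preserves quadratic values. Hence $\QQ[A_{M_i}]_a$ is identified with $\QQ[C_i]_a$, and \eqref{eq:terminal-spanning} becomes statement (2) of the corollary verbatim.

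For statement (1), note that $M_i$ is commensurable with $L$, so it has the same rational quadratic space and hence signature $(2,n)$ with $n\geq10$. Its discriminant form $A_{M_i}\simeq C_i$ satisfies conditions (1) and (2) of Proposition~\ref{prop:leaves}. Proposition~\ref{prop:splitting} therefore applies and shows that $M_i$ is of K3 type.

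There is no real obstacle, since all the substantive work sits in Lemma~\ref{lem:finiteinduction}, Proposition~\ref{prop:leaves} and Proposition~\ref{prop:splitting}. The only point needing care is that the abstract isomorphism $A_{M_i}\simeq C_i$ in \eqref{eq:overlattice-discriminant} is the natural one, namely $M_i^\vee/M_i=(M_i^\vee/L)/(M_i/L)=H_i^\perp/H_i$. This ensures the induction maps in (2) are the ones defined on $\QQ[C_i]$ and that the quadratic-value grading is respected. I would check this by noting that $M_i^\vee/L=H_i^\perp$, again by \eqref{eq:overlattice-discriminant}.
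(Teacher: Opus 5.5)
Your proposal is correct and follows the paper's argument: apply Proposition~\ref{prop:leaves} to $A_L$, set $M_i=L_{H_i}$ via the overlattice correspondence, transport the spanning identity through the natural isometry $A_{M_i}=M_i^\vee/M_i\simeq H_i^\perp/H_i$, and invoke Proposition~\ref{prop:splitting}, noting that $M_i$ still has signature $(2,n)$. Your check that this isometry is the natural one, compatible with $\uparrow_{H_i}^{A_L}$ and with the quadratic-value grading, is exactly the detail the paper's one-line justification leaves implicit.
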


\section{Picard groups of Baily--Borel compactifications}\label{sec:picard}

In this section, we prove Theorem~\ref{thm:picard-main}. We first show that pullbacks from the auxiliary overlattices constructed in Corollary~\ref{cor:auxiliary-overlattices} generate $\Pic(X_L)_\QQ$, then use finite norms and the Hodge index theorem to deduce Picard rank one for $\overline X_L$.

\subsection{Pullbacks of Heegner divisors}\label{subsec:picard-inputs}

We first recall the following results on Picard groups from \cite{BLMM17} and \cite{HLMY26}.

\begin{theorem}\label{thm:picard-inputs}
    Let $L$ have signature $(2,n)$ with $n\geq5$.
    \begin{enumerate}[label=\textup{(\arabic*)}]
        \item The group $\Pic(X_L)_\QQ$ is generated by the Hodge class and special divisors defined by rational $K_L$-invariant Schwartz functions.
        \item A line bundle algebraically equivalent to zero on $X_L$ has zero class in $\Pic(X_L)_\QQ$.
    \end{enumerate}
    If moreover $L$ is of K3 type and $n>8$, then
    \[
    \Pic(\overline X_L)_\QQ=\QQ\bar\lambda_L.
    \]
\end{theorem}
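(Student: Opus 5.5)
This theorem collects results from the literature, so the plan is to justify each part by citation and check that the hypotheses match.

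For part (1), I would invoke the main theorem of \cite{BLMM17}. For $n\geq 3$, it shows that $\Pic(X_L)_\QQ$ (equivalently $H^2(X_L,\QQ)$ when $b_1=0$) is spanned by the Hodge class and special divisors. Their argument runs as follows. First, by the Kudla--Millson theta lift the cohomology classes of special divisors are coefficients of a vector-valued modular form. Second, by the Bergeron--Li--Millson--Moeglin description of $H^{1,1}$ via Arthur packets, every class in $H^{1,1}$ is a theta lift for $n\geq 5$. Third, the Lefschetz $(1,1)$ theorem together with $H^1(X_L)=0$ identifies $\Pic(X_L)_\QQ$ with $H^{1,1}\cap H^2(X_L,\QQ)$. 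The point to check is that the special divisors produced there are exactly the $Z(t,\varphi;K_L)|_{X_L}$ with $\varphi$ rational and $K_L$-invariant. This is the adelic formulation of \cite[Section~2.6]{BLMM17}, so no Heegner comparison is needed here, and Theorem~\ref{thm:special-adelic-span} lets one pass to Heegner divisors afterwards if desired.

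For part (2), I would argue that $X_L$ has vanishing first Betti number for $n\geq3$, by Kazhdan/Borel vanishing for lattices in $\rO(2,n)$ of real rank two. Hence $\Pic^0$ of any smooth compactification of a resolution is trivial, so algebraically trivial line bundles are torsion. Rationally this gives zero, with a pushforward/pullback to a neat cover handling the coarse quotient.

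The final assertion is the main theorem of \cite{HLMY26} specialized to the stable orthogonal level. There the hypotheses are that $L$ splits $2U$ over $\ZZ$, splits $3U_p$ locally, and $n>8$, which is precisely the K3-type definition above. The main obstacle is purely bookkeeping: matching normalizations and confirming the range $n\geq 5$ in (1) and $n>8$ in the compactified statement against the cited sources.
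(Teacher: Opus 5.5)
Your proposal matches the paper, which proves this statement purely by citation: (1) and (2) from \cite[Theorem~3.6 and Corollary~3.8]{BLMM17}, and the compactified statement from \cite[Corollary~6.7]{HLMY26}. The one difference is in (2), where you sketch the underlying reason ($b_1=0$ via property~(T), run on a neat cover with a smooth compactification) rather than citing \cite[Corollary~3.8]{BLMM17}; that route is also fine.
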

Statements (1) and (2) follow from \cite[Theorem~3.6 and Corollary~3.8]{BLMM17}, and the last assertion is \cite[Corollary~6.7]{HLMY26}.

We now describe the pullback of Heegner divisors under the maps associated with even overlattices. Let $H\subset A_L$ be isotropic and put $M=L_H$. Recall that
\[
M/L=H,\qquad M^\vee/L=H^\perp,\qquad A_M=H^\perp/H
\]
by \eqref{eq:overlattice-discriminant}. We may use the same component $\mathcal D^+$ for $L$ and $M$.

\begin{lemma}\label{lem:overlattice-morphisms}
    Suppose that $n\geq3$. Then $\widetilde{\rO}^{+}(L)\subset\widetilde{\rO}^{+}(M)$ has finite index, and the induced finite morphism
    \[
    f_H:X_L\longrightarrow X_M
    \]
    extends to a finite surjective morphism
    \[
    \overline f_H:\overline X_L\longrightarrow\overline X_M, \qquad \overline f_H^{-1}(X_M)=X_L.
    \]
    Moreover,
    \[
    f_H^*\lambda_M=\lambda_L, \qquad \overline f_H^*\bar\lambda_M=\bar\lambda_L
    \]
    in the corresponding rational Picard groups.
\end{lemma}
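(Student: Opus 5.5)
The inclusion $\widetilde{\rO}^{+}(L)\subseteq\widetilde{\rO}^{+}(M)$ follows from \eqref{eq:overlattice-stable-inclusion}, since both groups lie in $G(\QQ)^+$. For finite index, I would compare with a common sublattice. Let $e$ be the exponent of $A_L$. Then $eM\subseteq L\subseteq M$, and the subgroup $\Gamma(e)\subseteq\widetilde{\rO}^{+}(M)$ acting trivially on $M/eM$ is a principal congruence subgroup of finite index in $\rO(M)$. Any $g\in\Gamma(e)$ satisfies $(g-1)M\subseteq eM\subseteq L$. Hence $g$ preserves $L$ and $L^\vee\supseteq M$. Moreover, for $x\in L^\vee$ we have $ex\in M$, so $(g-1)x\in M$; this alone does not show that $g$ acts trivially on $A_L$. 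I would therefore instead take $\Gamma(e^2)$ with respect to $M$, or more cleanly the group $\{g\in\rO^+(L):(g-1)L^\vee\subseteq L\}$ together with an intermediate lattice $e^2L^\vee\subseteq eM\subseteq L$. Choosing $g$ trivial on $M/e^2 M$ gives $(g-1)L^\vee\subseteq e^{-1}(g-1)eL^\vee\subseteq e^{-1}e^2M\subseteq L$. Thus a principal congruence subgroup of $\widetilde{\rO}^{+}(M)$ lies in $\widetilde{\rO}^{+}(L)$, and both indices are finite. The map $f_H:X_L\to X_M$ is then the natural quotient map $\Gamma_L\backslash\mathcal D^+\to\Gamma_M\backslash\mathcal D^+$ by a group of finite index, so it is finite and surjective.

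For the compactification, I would use the functoriality of Baily--Borel compactifications for commensurable arithmetic groups. Both $\overline X_L$ and $\overline X_M$ are quotients of the same rational closure $\mathcal D^{+,*}=\mathcal D^+\sqcup\{\text{rational boundary components}\}$ with the Satake topology. Their boundary strata are indexed by orbits of the same sets of rational isotropic lines and planes in $V$, since $L_\QQ=M_\QQ$. The map $\Gamma_L\backslash\mathcal D^{+,*}\to\Gamma_M\backslash\mathcal D^{+,*}$ is therefore a continuous finite surjection. It is algebraic because both sides are $\operatorname{Proj}$ of rings of modular forms, and $\overline f_H$ corresponds to the inclusion $\bigoplus_k M_k(\Gamma_M)\subseteq\bigoplus_k M_k(\Gamma_L)$, which is an integral extension: each form for $\Gamma_L$ is a root of the polynomial whose coefficients are the elementary symmetric functions of its $\Gamma_M/\Gamma_L$-translates. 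The identity $\overline f_H^{-1}(X_M)=X_L$ holds because the map preserves the decomposition into interior and rational boundary components, as the interior of $\mathcal D^{+,*}$ is $\mathcal D^+$ for both groups. The hypothesis $n\geq3$ ensures the Koecher principle, so that the Proj description of $\overline X_\Gamma$ holds and no boundary conditions on modular forms are needed.

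For the Hodge classes, $\lambda$ is the descent of the tautological bundle $\mathcal O(-1)|_{\mathcal D^+}$, which is equivariant for all of $\rO^+(V)$. Its pullback under $f_H$ is therefore the same equivariant bundle viewed for the smaller group, at least after passing to a neat common subgroup and using rational coefficients to absorb the stabilizers. On the compactifications, $\bar\lambda_\Gamma$ is the class of $\mathcal O(1)$ on $\operatorname{Proj}$ of modular forms of weight $k$, that is, the $\QQ$-Cartier sheaf of weight-one modular forms. Under the inclusion of graded rings, $\overline f_H^*\mathcal O(1)=\mathcal O(1)$, and restricting to $X_L$ recovers the first identity. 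I expect the main obstacle to be the careful verification that the extension over the one-dimensional boundary is a morphism compatible with $\mathcal O(1)$. The graded-ring argument should settle this, with the Koecher principle for $n\geq3$ being the key input.
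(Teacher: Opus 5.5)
Your argument is correct and follows the same overall structure as the paper's proof, but two steps go a different way.

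\textbf{Finite index.} The paper notes that $\widetilde{\rO}^{+}(M)$ permutes the finitely many sublattices of $M$ of index $[M:L]$. Hence the stabilizer of $L$ has finite index, and $\widetilde{\rO}^{+}(L)$ is the kernel of that stabilizer's action on the finite group $A_L$. Your level-$e^2$ computation is equally valid, once you discard the level-$e$ false start: $(g-1)M\subseteq e^2M$ and $eL^\vee\subseteq L\subseteq M$ give $(g-1)L^\vee\subseteq eM\subseteq L$. It is also slightly stronger, since it exhibits a principal congruence subgroup of $\widetilde{\rO}^{+}(M)$ inside $\widetilde{\rO}^{+}(L)$.

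\textbf{Compactified Hodge class.} The paper does not use the graded-ring picture. It deduces $\overline f_H^*\bar\lambda_M=\bar\lambda_L$ from the open identity, using that $\Pic(\overline X_L)_\QQ\to\Pic(X_L)_\QQ$ is injective. This injectivity holds because $\overline X_L$ is normal and the boundary has codimension at least $n-1\geq2$; this is exactly where $n\geq3$ is used. Your route through $\overline f_H^*\mathcal O(1)=\mathcal O(1)$ for the finite inclusion $\bigoplus_kM_k(\Gamma_M)\subseteq\bigoplus_kM_k(\Gamma_L)$ is more hands-on. It does not actually need $n\geq3$: the $\operatorname{Proj}$ description of $\overline X_\Gamma$ holds in every dimension once holomorphy at the cusps is imposed, and the Koecher principle only makes that condition automatic. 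So your attribution of the hypothesis to Koecher is unnecessary rather than wrong.

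\textbf{Extension.} The paper simply cites \cite{BB66}, whereas you spell out the Satake-topology and integrality argument. The only slip there is cosmetic: the translates in your symmetric-function polynomial should be indexed by $\Gamma_L\backslash\Gamma_M$, since $\Gamma_L$ need not be normal in $\Gamma_M$.
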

\begin{proof}
    The inclusion follows from \eqref{eq:overlattice-stable-inclusion}. Since $M$ has only finitely many sublattices of index $[M:L]$, the stabilizer of $L$ in $\widetilde{\rO}^{+}(M)$ has finite index. Its action on $A_L$ has kernel $\widetilde{\rO}^{+}(L)$, so $[\widetilde{\rO}^{+}(M):\widetilde{\rO}^{+}(L)]<\infty$.

    The quotient map induces $f_H$ and its finite extension $\overline f_H$ by \cite{BB66}. Since the extension maps boundary to boundary, $\overline f_H^{-1}(X_M)=X_L$.

    The Hodge bundles come from the same equivariant line bundle on $\mathcal D^+$, giving $f_H^*\lambda_M=\lambda_L$. The compactified classes agree by injectivity of restriction to $X_L$, since the boundary has codimension at least two.
\end{proof}

The pullback of a Heegner divisor is given by the coset decomposition of $M$ over $L$.

\begin{proposition}\label{prop:Heegner-pullback}
    Let $H\subset A_L$ be isotropic and put $M=L_H$. For $t<0$ and $\mu\in H^\perp$, we have
    \begin{equation}\label{eq:overlatticepull}
        f_H^*Z_M(t,\mu+H) = \sum_{a\in H}Z_L(t,\mu+a).
    \end{equation}
\end{proposition}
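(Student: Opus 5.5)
Write $\Gamma_L=\widetilde{\rO}^{+}(L)$ and $\Gamma_M=\widetilde{\rO}^{+}(M)$. The plan is to use the adelic pullback identity \eqref{eq:special-divisor-pullback} and then rewrite the pulled-back Schwartz function as a sum of coset indicators.

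First we interpret $Z_M(t,\mu+H)$ adelically. Since $\mu\in H^\perp=M^\vee/L$, the class $\mu+H$ is an element of $A_M=H^\perp/H$, and by \eqref{eq:special-as-adelic}
\[
Z_M(t,\mu+H)=Z\bigl(t,\mathbf 1_{\mu+\widehat M};K_M\bigr)\big|_{X_M}.
\]
Next we check that $K_L\subseteq K_M$ locally at each prime. This is the local analogue of \eqref{eq:overlattice-stable-inclusion}: an element of $K_{L,p}$ acts trivially on $L_p^\vee/L_p$. It therefore preserves $M_p$, since $M_p/L_p\subset A_{L,p}$ is fixed. It also preserves $M_p^\vee$ and acts trivially on $M_p^\vee/M_p$, because this quotient is a subquotient of $L_p^\vee/L_p$. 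Moreover $G(\QQ)^+\cap K_L=\Gamma_L$, so $f_H$ is the map $X_{\Gamma_L}\to X_{\Gamma_M}$ induced by the inclusion $K_L\subset K_M$. Applying \eqref{eq:special-divisor-pullback} with $K=K_M$ and $K'=K_L$ gives
\[
f_H^*Z_M(t,\mu+H)=Z\bigl(t,\mathbf 1_{\mu+\widehat M};K_L\bigr)\big|_{X_L}.
\]

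Now we decompose the indicator function. Since $\widehat M=\bigsqcup_{a\in H}(a+\widehat L)$, where each $a\in H$ is viewed via a lift in $M$, we have
\[
\mathbf 1_{\mu+\widehat M}=\sum_{a\in H}\mathbf 1_{\mu+a+\widehat L}.
\]
Here $\mu+a$ ranges over the classes in $A_L$ that lie over the class $\mu+H$. The special cycle $Z(t,\varphi;K_L)|_{X_L}$ is linear in $\varphi$ by \eqref{eq:special-cycle-component}. Each summand is the indicator of a coset $\mu+a+\widehat L$, so it gives the Heegner divisor $Z_L(t,\mu+a)$, and \eqref{eq:overlatticepull} follows.

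The main obstacle is justifying that \eqref{eq:special-divisor-pullback} holds with the coarse-space multiplicity conventions when the group is not neat. In particular, one must make sure that no extra factor such as $[\Gamma_M:\Gamma_L]$ or $\pm1$ appears. To handle this, we first choose a neat normal subgroup $\Gamma'$ of finite index in $\Gamma_M$ that is contained in $\Gamma_L$. We then compute both sides after pulling back to $X_{\Gamma'}$. On $X_{\Gamma'}$ each term $Z_{\Gamma'}(\mathbf x)$ is literally the image of $\mathcal D^+_{\mathbf x}$, and both sides become the same sum over $\Gamma'$-orbits of vectors $\mathbf x\in\mu+M$ with $q(\mathbf x)=t$. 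Finally, because the normalization in Section~\ref{subsec:special-heegner} is independent of the choice of neat subgroup and pullback to $X_{\Gamma'}$ is injective on $\Pic_\QQ$, equality on $X_{\Gamma'}$ gives equality on $X_L$.
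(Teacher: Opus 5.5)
Your proposal is correct and follows the paper's proof. You pull back via \eqref{eq:special-divisor-pullback} using $K_L\subset K_M$, then split $\mathbf 1_{\widetilde\mu+\widehat M}$ into the indicators of the cosets of $\widehat L$ in $\widehat M$ and use linearity. Your local verification of $K_L\subseteq K_M$ and the neat-subgroup check of the multiplicities are extra justification; the paper simply takes the pullback identity \eqref{eq:special-divisor-pullback} as given.
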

\begin{proof}
    Choose lifts $\widetilde\mu\in M^\vee$ and $\widetilde a\in M$ of $\mu$ and $a\in H=M/L$. The coset decomposition of $\widehat M$ gives
    \[
    \mathbf 1_{\widetilde\mu+\widehat M} = \sum_{a\in H}\mathbf 1_{\widetilde\mu+\widetilde a+\widehat L}.
    \]

    Since $K_L\subset K_M$, \eqref{eq:special-divisor-pullback} gives
    \[
    f_H^*Z_M(t,\mu+H)=Z\bigl(t,\mathbf1_{\widetilde\mu+\widehat M};K_L\bigr)\big|_{X_L}.
    \]
    The characteristic-function identity above and linearity give \eqref{eq:overlatticepull}.
\end{proof}

Combining this formula with isotropic induction gives the desired generation of $\Pic(X_L)_\QQ$.

\begin{proposition}\label{prop:open-Picard-coverage}
    Suppose $n\geq10$. Let $M_1,\ldots,M_m$ be the overlattices provided by Corollary~\ref{cor:auxiliary-overlattices}, and let $f_i:X_L\to X_{M_i}$ be the induced morphisms. Then
    \begin{equation}\label{eq:coverage}
        \Pic(X_L)_{\QQ} = \sum_{i=1}^m f_i^*\Pic(X_{M_i})_{\QQ}.
    \end{equation}
\end{proposition}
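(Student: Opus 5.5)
The inclusion $\supseteq$ is trivial, since each $f_i^*$ lands in $\Pic(X_L)_\QQ$. For $\subseteq$, I would start from Theorem~\ref{thm:picard-inputs}(1): $\Pic(X_L)_\QQ$ is spanned by $\lambda_L$ and the special divisors $Z(t,\varphi)$ with $\varphi\in\cS(V(\AA_f),\QQ)^{K_L}$. By Lemma~\ref{lem:overlattice-morphisms}, $\lambda_L=f_1^*\lambda_{M_1}$, so the Hodge class is covered. By Theorem~\ref{thm:special-adelic-span} in the case $r=1$, every special divisor is a rational combination of Heegner divisors $Z_L(t,\mu)$ with $t<0$ and $\mu\in A_L$. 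It therefore suffices to show that each $Z_L(t,\mu)$ lies in the right-hand side of \eqref{eq:coverage}.

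Fix $t<0$ and put $a=t\bmod\ZZ$. Consider the $\QQ$-linear map
\[
\Phi_t:\QQ[A_L]_a\longrightarrow\Pic(X_L)_\QQ,\qquad \mathbf e_\mu\longmapsto Z_L(t,\mu).
\]
Since $Z_L(t,\mu)=0$ unless $\bar q_L(\mu)=a$, this map accounts for every Heegner divisor of norm $t$. In the same way, define
\[
\Phi_t^{(i)}:\QQ[A_{M_i}]_a\longrightarrow\Pic(X_{M_i})_\QQ,\qquad \mathbf e_\nu\longmapsto Z_{M_i}(t,\nu).
\]
Here $A_{M_i}=H_i^\perp/H_i$, so every $\nu$ has the form $\mu+H_i$ with $\mu\in H_i^\perp$. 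Proposition~\ref{prop:Heegner-pullback} reads exactly
\[
f_i^*\circ\Phi_t^{(i)}=\Phi_t\circ\uparrow_{H_i}^{A_L}
\]
on $\QQ[A_{M_i}]_a$, because $\uparrow_{H_i}^{A_L}(\mathbf e_{\mu+H_i})=\sum_{h\in H_i}\mathbf e_{\mu+h}$.

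Now apply Corollary~\ref{cor:auxiliary-overlattices}(2), which is available because $n\geq10$: any $\mathbf e_\mu\in\QQ[A_L]_a$ can be written as $\sum_i\uparrow_{H_i}^{A_L}(v_i)$ with $v_i\in\QQ[A_{M_i}]_a$. Applying $\Phi_t$ gives
\[
Z_L(t,\mu)=\sum_i f_i^*\Phi_t^{(i)}(v_i)\in\sum_i f_i^*\Pic(X_{M_i})_\QQ,
\]
which finishes the argument.

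The step most likely to need care is the compatibility of normalizations. One has to check that $Z_L(t,\mu)$ and $Z_{M_i}(t,\nu)$, defined through \eqref{eq:special-as-adelic}, are the same objects that enter \eqref{eq:overlatticepull}, with no extra multiplicities coming from $\pm$ symmetry or from non-neatness. Another point is that $\Phi_t$ must be well defined on all of $\QQ[A_L]_a$, including $\mu$ and $-\mu$ giving the same divisor; this is harmless because $\Phi_t$ is defined on basis vectors and only linearity is used. Since Proposition~\ref{prop:Heegner-pullback} is derived directly from \eqref{eq:special-divisor-pullback} with the same conventions, I expect this to go through. The remaining subtlety is that Theorem~\ref{thm:picard-inputs}(1) requires $n\geq5$, which holds here.
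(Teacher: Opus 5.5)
Your proposal is correct and follows essentially the same route as the paper. You intertwine the Heegner-divisor maps $\QQ[A_N]_a\to\Pic(X_N)_\QQ$ with isotropic induction via Proposition~\ref{prop:Heegner-pullback}, cover every Heegner divisor using the spanning statement of Corollary~\ref{cor:auxiliary-overlattices}(2), treat $\lambda_L$ by Lemma~\ref{lem:overlattice-morphisms}, and conclude with Theorem~\ref{thm:picard-inputs}(1) together with the $r=1$ case of the comparison theorem.
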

\begin{proof}
    Put $H_i=M_i/L$. For $t<0$ and $a=t+\ZZ$, define
    \begin{align*}
        \mathcal Z_{N,t}: \QQ[A_N]_a & \longrightarrow\Pic(X_N)_{\QQ} \\
        \mathbf e_\nu & \longmapsto[Z_N(t,\nu)]
    \end{align*}
    for $N=L,M_1,\ldots,M_m$. Proposition~\ref{prop:Heegner-pullback} gives
    \[
    \mathcal Z_{L,t}\circ\uparrow_{H_i}^{A_L} = f_i^*\circ\mathcal Z_{M_i,t}
    \]
    on $\QQ[A_{M_i}]_a$.

    Since the induction images span $\QQ[A_L]_a$ by Corollary~\ref{cor:auxiliary-overlattices}, every Heegner divisor class lies in the right-hand side of \eqref{eq:coverage}. The same holds for $\lambda_L=f_i^*\lambda_{M_i}$. These classes generate $\Pic(X_L)_\QQ$ by Theorem~\ref{thm:picard-inputs}(1) and Theorem~\ref{thm:comparison-main}, proving the equality.
\end{proof}

\subsection{Cartier divisors and finite morphisms}

Let $X\subset Y$ be a dense open subset of a normal variety, with complement of codimension at least two. For a rational Weil divisor $D=\sum_i a_iD_i$ on $X$, write
\[
\overline D=\sum_i a_i\overline{D_i},
\]
where $\overline{D_i}$ is the closure of $D_i$ in $Y$. 

Let $f:Y\to Y'$ be a finite surjective morphism of normal varieties. The norm construction induces a homomorphism
\[
N_f:\Pic(Y)_\QQ\longrightarrow\Pic(Y')_\QQ;
\]
see \cite[\S~6.5]{EGA}. This map is compatible with proper pushforward on Chow groups, and we therefore also denote it by $f_*$. We will use the following elementary fact.

\begin{lemma}\label{lem:mixed-projection}
    Let $f:Y\to Y'$ be finite and surjective between normal projective $n$-folds with $n\geq3$, and let
    $X'\subset Y'$ be open with $\operatorname{codim}(Y'\setminus X',Y')\geq3$. Put $X=f^{-1}(X')$ and $f_X=f|_X$. For a rational Cartier divisor $\Delta$ on $X'$, set $D=\overline\Delta$ and $E=\overline{f_X^*\Delta}$. Then, for $\gamma\in\Pic(Y)_\QQ$ and $h'\in\Pic(Y')_\QQ$, writing $h=f^*h'$, we have
    \[
    \deg(\gamma\cdot E\cdot h^{n-2})=\deg\bigl((f_*\gamma)\cdot D\cdot(h')^{n-2}\bigr).
    \]
\end{lemma}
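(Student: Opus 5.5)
The plan is to reduce the identity to the projection formula for the finite map $f$, after comparing $E$ with $f^*D$. The first point to settle is that $D=\overline\Delta$ is $\QQ$-Cartier only on $X'$, so $D$ is a priori just a rational Weil divisor on $Y'$. Since $h'$ and $\gamma$ are Cartier, the intersection numbers $\deg(\gamma\cdot E\cdot h^{n-2})$ and $\deg((f_*\gamma)\cdot D\cdot (h')^{n-2})$ are still well defined. Each is obtained by intersecting Cartier classes with the Weil divisor viewed as an $(n-1)$-cycle, that is, as the degree of a zero-cycle obtained by capping an $(n-1)$-cycle with $n-1$ Cartier classes.

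I would first rewrite the right-hand side using the projection formula in the Chow/Fulton formalism:
\[
\deg\bigl((f_*\gamma)\cdot D\cdot (h')^{n-2}\bigr)=\deg\bigl(f_*\gamma\cap ((h')^{n-2}\cap [D])\bigr).
\]
The key compatibility is the statement that the norm $N_f=f_*$ on $\Pic$ satisfies
\[
N_f(\gamma)\cap \alpha=f_*(\gamma\cap f^*\alpha)
\]
for cycles $\alpha$ on $Y'$. This formula needs a pullback $f^*\alpha$ of cycles, which is available because $f$ is finite and surjective between normal varieties: pull back divisors via the associated Cartier divisor where this makes sense, or more generally use flat pullback over the open set where $f$ is flat. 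Applying it with $\alpha=(h')^{n-2}\cap[D]$, and using $f^*(h')^{n-2}=h^{n-2}$, the right-hand side becomes
\[
\deg\bigl(\gamma\cdot h^{n-2}\cdot f^*[D]\bigr),
\]
where $f^*[D]$ is the Weil pullback. This Weil pullback is the closure of the pullback over $X$, with multiplicities given by ramification.

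The second step is to identify $f^*[D]$ with $E$. Over $X'$, the divisor $D$ restricts to $\Delta$, which is Cartier, so $f^*D|_X=f_X^*\Delta$. The two divisors $f^*[D]$ and $E=\overline{f_X^*\Delta}$ therefore agree on $X$. Their difference is supported on $f^{-1}(Y'\setminus X')$. Because $f$ is finite, this locus has codimension at least $3$ in $Y$, so it contains no divisor, and hence $f^*[D]=E$ as Weil divisors.

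The main obstacle is justifying the mixed projection formula $N_f(\gamma)\cap\alpha=f_*(\gamma\cap f^*\alpha)$ when $\alpha$ is not Cartier. I would handle it with a codimension argument, which is exactly where the hypothesis $\operatorname{codim}\geq3$ enters. The zero-cycle $\gamma\cdot h^{n-2}\cdot E$ can be computed after moving $h'$ to general very ample representatives, reducing to the case $h'$ ample after writing it as a difference of amples. Choose $n-2$ general members $H'_1,\dots,H'_{n-2}$ of $|m h'|$. Their intersection with $D$ is a curve $C'$ that avoids $Y'\setminus X'$, since that locus has dimension at most $n-3$ and general hyperplanes of the number $n-2$ miss it. The pullbacks $f^*H'_j$ similarly cut $E$ in the curve $C=f^{-1}(C')\cap E$, which lies entirely in $X$, where $E=f_X^*\Delta$ is Cartier. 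The identity then reduces to $\deg(\gamma|_C)=\deg(N_f\gamma|_{C'})$ for the finite map $C\to C'$, with multiplicities matching because $[C]=f^*[C']$ as cycles. This is the standard property of the norm for finite maps of curves, $\deg N_f(\mathcal L)|_{C'}=\deg \mathcal L|_{f^*C'}$. Scaling by $m^{n-2}$ and using linearity finishes the proof.
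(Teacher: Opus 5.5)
Your argument is correct in outline, but it takes a different route from the paper. The paper never moves $h'$. It proves the cycle-level identity $f_*(\gamma\cdot E)=(f_*\gamma)\cdot D$ in $\CH_{n-2}(Y')_\QQ$ by first checking it on $X'$. There $E|_X=f_X^*\Delta$ is rational Cartier, so the ordinary projection formula and the compatibility $f_{X,*}(\gamma|_X)=(f_*\gamma)|_{X'}$ apply. It then uses that restriction $\CH_{n-2}(Y')_\QQ\to\CH_{n-2}(X')_\QQ$ is injective, because $\dim(Y'\setminus X')\le n-3$, and finishes by capping with $(h')^{n-2}$ and applying the projection formula once more.

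You exploit the same dimension bound geometrically instead: $n-2$ general members of $|mh'|$ miss $Y'\setminus X'$, so everything is cut down to $1$-cycles $C\subset X$ and $C'\subset X'$, and the claim becomes a degree identity there. The paper's route is shorter and gives a stronger statement, an identity of $(n-2)$-cycle classes. It also works for arbitrary $h'$ without Bertini or general position. Yours makes the role of the codimension-$3$ hypothesis very visible, but it needs tightening in three places.

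First, the expression has degree $n-2$ in $h'$, not degree one. Writing $h'$ as a difference of ample classes therefore requires the multilinear version with distinct very ample $h'_1,\dots,h'_{n-2}$; your argument proves that verbatim, but linearity alone does not suffice.

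Second, the opening ``mixed projection formula'' for arbitrary cycles $\alpha$ on $Y'$ has no meaning as stated. There is no Chow-level pullback for a finite non-flat map onto a singular variety. You only need it for $\alpha=[C']$ with $C'\subset X'$, where $f^*[C']$ can be defined as the refined intersection $f^*H'_1\cdots f^*H'_{n-2}\cdot f_X^*\Delta\cdot[X]$.

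Third, the final curve identity should not be derived from base change of norms along $C\to C'$. In general $f$ is flat only away from a closed subset of codimension at least $2$ in $Y'$. An $(n-2)$-dimensional component of that subset may lie in $D$, and then it meets $C'$. Derive the identity instead from the refined projection formula for Cartier divisors on $X'$ together with $f_*(\gamma\cap[X])=(f_*\gamma)\cap[X']$. That is precisely the paper's key step, so in the end both proofs rest on the same core.
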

\begin{proof}
    On $X'$, the projection formula gives
    \[
    \bigl.f_*(\gamma\cdot E)\bigr|_{X'}=\Delta\cdot f_{X,*}(\gamma|_X)=(f_*\gamma)|_{X'}\cdot\Delta.
    \]
    By localization and the codimension assumption, restriction
    \[
    \CH_{n-2}(Y')_\QQ\longrightarrow\CH_{n-2}(X')_\QQ
    \]
    is injective. Hence
    \[
    f_*(\gamma\cdot E)=(f_*\gamma)\cdot D\qquad\text{in }\CH_{n-2}(Y')_\QQ.
    \]
    The assertion then follows from intersecting with $(h')^{n-2}$ and applying the projection formula.
\end{proof}

\subsection{A criterion for Picard number one}

For a projective variety $Y$, let $N^1(Y)_\QQ$ denote the space of rational Cartier divisors modulo numerical equivalence. We combine Lemma~\ref{lem:mixed-projection} with the Hodge index theorem to obtain the following criterion.

\begin{proposition}\label{prop:joint}
    Let $Y,Y_1,\ldots,Y_m$ be normal projective varieties of dimension $n\geq3$, with $m\geq1$, and let
    \[
    f_i:Y\longrightarrow Y_i,\qquad 1\leq i\leq m,
    \]
    be finite surjective morphisms. Suppose that there are dense open subsets $X\subset Y$ and $X_i\subset Y_i$ such that
    \[
    f_i^{-1}(X_i)=X,\qquad \dim(Y_i\setminus X_i)\leq n-3.
    \]
    Write $f_{i, X}: X\to X_i$ for the restrictions of $f_i$ to $X$. Assume that:
    \begin{enumerate}[label=\textup{(\arabic*)}]
        \item There are ample Cartier classes $h_i\in\Pic(Y_i)_{\QQ}$ and a class $h\in\Pic(Y)_{\QQ}$ satisfying
        \[
        f_i^*h_i=h \qquad\text{for every }i.
        \]

        \item The Picard groups satisfy
        \[
        \Pic(X)_{\QQ}=\sum_{i=1}^m f_{i,X}^*\Pic(X_i)_{\QQ}, \qquad \Pic(Y_i)_{\QQ}=\QQ h_i \quad\text{for every }i.
        \]

        \item The natural map
        \[
        \Pic(Y)_{\QQ}\longrightarrow N^1(Y)_{\QQ}
        \]
        is injective.
    \end{enumerate}
    Then
    \[
    \Pic(Y)_{\QQ}=\QQ h.
    \]
\end{proposition}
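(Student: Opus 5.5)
Given $\gamma\in\Pic(Y)_\QQ$, we aim to show it is numerically proportional to $h$; hypothesis (3) then gives $\gamma\in\QQ h$. First, $h$ is ample on $Y$, since $h=f_1^*h_1$ is the pullback of an ample class under a finite morphism. Replacing $\gamma$ by $\gamma-c\,h$ with $c=\deg(\gamma\cdot h^{n-1})/\deg(h^n)$, we may assume $\deg(\gamma\cdot h^{n-1})=0$. By the Hodge index theorem on the normal projective variety $Y$ (applied after a resolution, or to a general complete-intersection surface cut out by $h$; with $h$ ample, this gives $\deg(\gamma^2\cdot h^{n-2})\le0$ with equality iff $\gamma\equiv0$ numerically), it suffices to show
\[
\deg(\gamma^2\cdot h^{n-2})=0.
\]

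To compute this, restrict $\gamma$ to $X$. By (2), write
\[
\gamma|_X=\sum_i f_{i,X}^*\Delta_i, \qquad \Delta_i\in\Pic(X_i)_\QQ.
\]
Since $X_i$ is open with complement of codimension at least three in normal $Y_i$, each $\Delta_i$ is represented by a rational Weil divisor whose closure $D_i=\overline{\Delta_i}$ is a Weil divisor on $Y_i$. The key point is that $D_i$ is in fact $\QQ$-Cartier: this is not automatic. Rather than needing this, I would only use $E_i=\overline{f_{i,X}^*\Delta_i}$ as cycles on $Y$. Because $Y\setminus X=f_i^{-1}(Y_i\setminus X_i)$ has codimension $\geq3$ (finite maps preserve dimension), the divisor $\gamma$ and $\sum_iE_i$ agree as Weil divisor classes: they agree on $X$, and $\CH_{n-1}(Y)\to\CH_{n-1}(X)$ is an isomorphism. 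Hence
\[
\deg(\gamma^2\cdot h^{n-2})=\sum_i\deg(\gamma\cdot E_i\cdot h^{n-2}),
\]
where the intersection of the Cartier classes $\gamma,h$ with the cycle $E_i$ makes sense.

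Now apply Lemma~\ref{lem:mixed-projection} to each $f_i$ with $h'=h_i$:
\[
\deg(\gamma\cdot E_i\cdot h^{n-2})=\deg\bigl((f_{i*}\gamma)\cdot D_i\cdot h_i^{n-2}\bigr).
\]
By (2), $f_{i*}\gamma=c_ih_i$ in $\Pic(Y_i)_\QQ$, and $c_i$ is detected by degrees: applying the projection formula,
\[
c_i\deg(h_i^n)=\deg(f_{i*}\gamma\cdot h_i^{n-1})=\deg(\gamma\cdot h^{n-1})=0.
\]
Thus $c_i=0$, each term vanishes, and $\deg(\gamma^2\cdot h^{n-2})=0$. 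Hodge index then gives $\gamma\equiv0$ numerically, and (3) gives $\gamma=0$ in $\Pic(Y)_\QQ$. Undoing the normalization, every class lies in $\QQ h$.

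The main obstacle I expect is the bookkeeping of intersecting Cartier classes with the Weil closures $E_i$ and $D_i$ on singular normal varieties. One must check that replacing $\gamma$ by $\sum E_i$ inside $\deg(\gamma\cdot\gamma\cdot h^{n-2})$ is legitimate: intersect one copy of $\gamma$ as a Cartier operator against the cycle class of the other, using injectivity of restriction in $\CH_{n-1}$. One must also verify the form of Hodge index used for normal singular $Y$ (via pullback to a resolution, where $h$ stays nef and big, with the equality case handled through numerical triviality against curves).
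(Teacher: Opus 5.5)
Your proposal is correct and follows essentially the same route as the paper: normalize so that $\deg(\gamma\cdot h^{n-1})=0$, use $\Pic(Y_i)_\QQ=\QQ h_i$ and the projection formula to get $f_{i*}\gamma=0$, and replace one copy of $\gamma$ by $\sum_i E_i$ via $\Cl(Y)_\QQ\cong\Cl(X)_\QQ$. Then apply Lemma~\ref{lem:mixed-projection} to obtain $\deg(\gamma^2\cdot h^{n-2})=0$, and conclude with the Hodge index theorem (used as a black box in the paper as well) and hypothesis (3); the only difference is that you establish $f_{i*}\gamma=0$ at the end rather than the beginning, which is immaterial.
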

\begin{proof}
    The class $h$ is ample, since it is the pullback of an ample class under a finite morphism. Let $\alpha\in\Pic(Y)_{\QQ}$, and set
    \[
    a=\frac{\deg(\alpha\cdot h^{n-1})}{\deg(h^n)}, \qquad \beta=\alpha-ah.
    \]
    Then $\deg(\beta\cdot h^{n-1})=0$.

    Since $\Pic(Y_i)_\QQ=\QQ h_i$, the class $(f_i)_*\beta$ is a rational multiple of $h_i$. The projection formula gives
    \[
    \deg\bigl((f_i)_*\beta\cdot h_i^{n-1}\bigr)=\deg(\beta\cdot h^{n-1})=0.
    \]
    Since $\deg(h_i^n)>0$, we obtain
    \[
    (f_i)_*\beta=0\qquad\text{in }\Pic(Y_i)_\QQ\quad\text{for every }i.
    \]

    The generation statement in assumption~(2) gives classes $\beta_i\in\Pic(X_i)_\QQ$ such that
    \[
    \beta|_X=\sum_{i=1}^m f_{i,X}^*\beta_i.
    \]
    Represent $\beta_i$ by a rational Cartier divisor $\Delta_i$ on $X_i$, and let
    \[
    D_i=\overline{\Delta_i},\qquad E_i=\overline{f_{i,X}^*\Delta_i}
    \]
    denote their Weil closures in $Y_i$ and $Y$, respectively. Since $f_i$ is finite,
    \[
    \dim(Y\setminus X)=\dim(Y_i\setminus X_i) \leq n-3.
    \]
    Restriction identifies $\Cl(Y)_{\QQ}$ with $\Cl(X)_{\QQ}$, and we obtain
    \[
    [\beta]=\sum_{i=1}^m[E_i] \qquad\text{in }\Cl(Y)_{\QQ}.
    \]

    Applying Lemma~\ref{lem:mixed-projection} to each $f_i$ with $\gamma=\beta$, we find
    \begin{align*}
        \deg(\beta^2\cdot h^{n-2}) & = \sum_{i=1}^m \deg(\beta\cdot E_i\cdot h^{n-2}) \\
        & = \sum_{i=1}^m \deg\bigl((f_i)_*\beta\cdot D_i\cdot h_i^{n-2}\bigr) \\
        & = 0.
    \end{align*}
    By \cite[Theorem~6.3]{Kleiman05}, $\beta$ is numerically trivial. Assumption~(3) gives $\beta=0$ in $\Pic(Y)_{\QQ}$. Hence $\alpha=ah$, proving the proposition.
\end{proof}

\subsection{Proof of Theorem~\ref{thm:picard-main}}\label{subsec:overlattice-rank-one}

For $n\geq5$, we have
\begin{equation}\label{eq:BB-Picard-injections}
    \Pic(\overline X_L)_\QQ\hookrightarrow\Pic(X_L)_\QQ,
    \qquad
    \Pic(\overline X_L)_\QQ\hookrightarrow N^1(\overline X_L)_\QQ.
\end{equation}
The first injection follows from normality and the fact that the boundary has codimension at least two. For the second, a numerically trivial line bundle on $\overline X_L$ has an algebraically trivial positive tensor power by \cite[Theorem~6.3]{Kleiman05}. Its restriction therefore vanishes in $\Pic(X_L)_\QQ$ by Theorem~\ref{thm:picard-inputs}(2), and the first injection gives the desired conclusion.

We then apply Proposition~\ref{prop:joint} to the maps associated with the auxiliary overlattices.

\begin{theorem}\label{thm:rank10}
    Let $L$ be a lattice of signature $(2,n)$ with $n\geq10$. Then
    \[
    \Pic(\overline X_L)_{\QQ} = \QQ\bar\lambda_L.
    \]
\end{theorem}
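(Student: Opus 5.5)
The plan is to apply Proposition~\ref{prop:joint} with $Y=\overline X_L$, $X=X_L$, $Y_i=\overline X_{M_i}$, $X_i=X_{M_i}$ and $f_i=\overline f_{H_i}$, where $M_1,\ldots,M_m$ are the overlattices of Corollary~\ref{cor:auxiliary-overlattices} and $H_i=M_i/L$. The conclusion $\Pic(Y)_\QQ=\QQ h$ with $h=\bar\lambda_L$ is exactly the statement. So the work is to check the geometric setup and the three hypotheses, each of which should follow from a result already established.

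\textbf{Setup.} By Lemma~\ref{lem:overlattice-morphisms}, which applies since $n\geq10\geq3$, each $\overline f_{H_i}$ is finite and surjective between normal projective $n$-folds, and $\overline f_{H_i}^{-1}(X_{M_i})=X_L$. The boundary of $\overline X_{M_i}$ has dimension at most $1$, and $1\leq n-3$ because $n\geq 4$. This gives the required codimension condition.

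\textbf{Hypothesis (1).} Take $h_i=\bar\lambda_{M_i}$, which is ample by our sign convention, and $h=\bar\lambda_L$. Lemma~\ref{lem:overlattice-morphisms} then gives $\overline f_{H_i}^*\bar\lambda_{M_i}=\bar\lambda_L$.

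\textbf{Hypothesis (2).} The generation statement
\[
\Pic(X_L)_\QQ=\sum_i f_i^*\Pic(X_{M_i})_\QQ
\]
is Proposition~\ref{prop:open-Picard-coverage}. For the second half, each $M_i$ is of K3 type by Corollary~\ref{cor:auxiliary-overlattices}(1), and has signature $(2,n)$ with $n>8$. The last assertion of Theorem~\ref{thm:picard-inputs} therefore gives $\Pic(\overline X_{M_i})_\QQ=\QQ\bar\lambda_{M_i}$.

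\textbf{Hypothesis (3).} This is the second injection in \eqref{eq:BB-Picard-injections}, valid since $n\geq5$.

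I expect no step to be a genuine obstacle, because the hard inputs (the spanning of $\QQ[A_L]$ by isotropic inductions, the K3-type splitting, and the Hodge index argument) are already packaged in the earlier results. The only points needing care are bookkeeping:
\begin{enumerate}[label=\textup{(\roman*)}]
\item the open-level morphisms $f_i$ in Proposition~\ref{prop:open-Picard-coverage} must coincide with the restrictions $f_{i,X}$ of the compactified maps;
\item the overlattices $M_i$ keep the same signature $(2,n)$, so $\dim X_{M_i}=n$;
\item we use the same component $\mathcal D^+$ throughout, so that the Hodge classes correspond.
\end{enumerate}
The case $m\geq1$ is automatic, since Proposition~\ref{prop:leaves} always produces at least one subgroup $H_i$ (possibly $H_1=0$, in which case $M_1=L$ and the statement follows directly).
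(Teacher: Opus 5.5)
Your proposal is correct and follows the paper's own proof: apply Proposition~\ref{prop:joint} to the compactified maps $\overline f_i:\overline X_L\to\overline X_{M_i}$ for the overlattices of Corollary~\ref{cor:auxiliary-overlattices}, checking the hypotheses via Lemma~\ref{lem:overlattice-morphisms}, Theorem~\ref{thm:picard-inputs}, Proposition~\ref{prop:open-Picard-coverage}, and \eqref{eq:BB-Picard-injections}. Your bookkeeping checks (boundary dimension at most $1\leq n-3$, compatibility of the open and compactified maps, and $m\geq1$) are the same points the paper's proof uses, mostly without comment.
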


\begin{proof}
    Choose $M_i$ as in Corollary~\ref{cor:auxiliary-overlattices}. By Lemma~\ref{lem:overlattice-morphisms}, the maps $f_i:X_L\to X_{M_i}$ extend to finite surjective morphisms
    \[
    \overline f_i:\overline X_L\longrightarrow\overline X_{M_i},\qquad \overline f_i^{-1}(X_{M_i})=X_L,\qquad \overline f_i^*\bar\lambda_{M_i}=\bar\lambda_L.
    \]
    Each $M_i$ is of K3 type, so Theorem~\ref{thm:picard-inputs} gives
    \[
    \Pic(\overline X_{M_i})_\QQ=\QQ\bar\lambda_{M_i}.
    \]
    Proposition~\ref{prop:open-Picard-coverage} gives generation by pullbacks on $X_L$, and \eqref{eq:BB-Picard-injections} gives injectivity into $N^1(\overline X_L)_\QQ$. Since the boundary has dimension at most one and the Hodge classes are ample, Proposition~\ref{prop:joint} applies.
\end{proof}

For an arbitrary congruence subgroup, we use the sublattice $aL\subset L$.

\begin{proposition}\label{prop:scalar-refinement}
    Let $L$ have signature $(2,n)$, and let
    $\Gamma\subseteq\rO^+(L)$ be a congruence subgroup.
    There exists an integer $a\geq1$ such that
    \[
    \widetilde{\rO}^{+}(aL)\subseteq\Gamma.
    \]
\end{proposition}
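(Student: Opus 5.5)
A congruence subgroup $\Gamma\subseteq\rO^+(L)$ contains, by definition, a principal congruence subgroup
\[
\Gamma(L,N)=\ker\bigl(\rO^+(L)\longrightarrow\rO(L/NL)\bigr)
\]
for some integer $N\geq1$. The plan is to take $a=N$, or a suitable multiple of it, and show that every element of $\widetilde{\rO}^{+}(aL)$ acts trivially on $L/NL$. Throughout we regard all groups as subgroups of $\rO(V)$, since $aL$ and $L$ span the same rational space.

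The first step is to compute the dual of $aL$. Rescaling the lattice by $a$ multiplies the bilinear form on vectors of $aL$ by $a^2$ relative to $L$, so
\[
(aL)^\vee=\{x\in V:(x,aL)\subseteq\ZZ\}=a^{-1}L^\vee .
\]
An element $g\in\widetilde{\rO}^{+}(aL)$ preserves $aL$, hence preserves $L=a^{-1}(aL)$, so $g\in\rO^+(L)$. Moreover $g$ acts trivially on $(aL)^\vee/aL=a^{-1}L^\vee/aL$, which means
\[
(g-1)\,a^{-1}L^\vee\subseteq aL,\qquad\text{equivalently}\qquad (g-1)L^\vee\subseteq a^2L .
\]
Since $L\subseteq L^\vee$, this gives in particular $(g-1)L\subseteq a^2L$. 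Therefore $g$ lies in $\Gamma(L,a^2)$.

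The second step is to choose $a$ so that $\Gamma(L,a^2)\subseteq\Gamma(L,N)$. Taking $a=N$ works, since then $N\mid a^2$ and so $\Gamma(L,a^2)\subseteq\Gamma(L,N)\subseteq\Gamma$. The orientation condition needs no extra work: both groups sit inside $G(\QQ)^+$, and the component $\mathcal D^+$ is the same for $aL$ and $L$.

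The main point to verify carefully is that the paper's notion of congruence subgroup contains some $\Gamma(L,N)$. In the adelic language of \eqref{eq:adelic-components}, this amounts to $\Gamma\supseteq G(\QQ)^+\cap K$ for a compact open $K\subseteq G(\AA_f)$. Every such $K$ contains $K(N)=\ker\bigl(\rO(\widehat L)\to\rO(\widehat L/N\widehat L)\bigr)$ for some $N$, because these groups form a neighbourhood basis of the identity. The same computation as above, done locally, gives $K_{aL}\subseteq K(a^2)$. Beyond this, the argument is routine.
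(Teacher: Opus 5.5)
Your proof is correct and is essentially the paper's argument. Both take a level $m$ (your $N$) whose principal congruence subgroup lies in $\Gamma$ and choose $a$ with $m\mid a^2$; then $(aL)^\vee=a^{-1}L^\vee$ gives $(g-1)L^\vee\subseteq a^2L$, which restricts to $L\subseteq L^\vee$. Your closing remark on the adelic meaning of ``congruence subgroup'' is a harmless addition that the paper leaves implicit.
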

\begin{proof}
    Choose an integer $m\geq1$ such that $\Gamma$ contains the principal congruence subgroup
    \[
    \ker\bigl(\rO^+(L)\longrightarrow \operatorname{Aut}(L/mL)\bigr),
    \]
    and choose $a\geq1$ with $m\mid a^2$.

    Note that $\rO^+(aL)=\rO^+(L)$. If $g\in\widetilde{\rO}^{+}(aL)$, triviality on
    \[
    A_{aL}=a^{-1}L^\vee/aL
    \]
    applied to $y/a$, for $y\in L^\vee$, gives
    \[
    (g-1)y\in a^2L.
    \]
    Since $L\subseteq L^\vee$, the element $g$ acts trivially on $L/a^2L$, hence also on $L/mL$. Therefore $g\in\Gamma$.
\end{proof}

We can now prove the theorem for $X_\Gamma$.

\begin{proof}[Proof of Theorem~\ref{thm:picard-main}]
    For a congruence subgroup $\Gamma\subseteq\rO^+(L)$, Proposition~\ref{prop:scalar-refinement} gives an integer $a$ such that $\widetilde{\rO}^{+}(aL)\subseteq\Gamma$. The finite map
    \[
    g:\overline X_{aL}\longrightarrow\overline X_\Gamma
    \]
    satisfies $g^*\bar\lambda_\Gamma=\bar\lambda_{aL}$. The lattice $aL$ is even of the same signature, so Theorem~\ref{thm:rank10} gives $\Pic(\overline X_{aL})_\QQ=\QQ\bar\lambda_{aL}$. For $\alpha\in\Pic(\overline X_\Gamma)_\QQ$, write $g^*\alpha=c\bar\lambda_{aL}$. Injectivity of finite pullback implies $\alpha=c\bar\lambda_\Gamma$, proving the theorem.
\end{proof}

\begin{corollary}
    Let $L$ have signature $(2,n)$ with $n\geq10$. The Hodge class and Heegner divisor classes generate $\Pic(X_L)_\QQ$. A class in $\Pic(X_L)_\QQ$ extends to $\overline X_L$ if and only if it is a rational multiple of $\lambda_L$.
\end{corollary}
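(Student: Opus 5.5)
The corollary has two parts: generation of $\Pic(X_L)_\QQ$, and the description of which classes extend. I would derive both from results already proved.

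\emph{Generation.} By Theorem~\ref{thm:picard-inputs}(1), $\Pic(X_L)_\QQ$ is spanned by $\lambda_L$ together with the special divisors $Z(t,\varphi)$, where $\varphi$ runs over rational $K_L$-invariant Schwartz functions. Theorem~\ref{thm:comparison-main} in codimension $r=1$ (equivalently Theorem~\ref{thm:special-adelic-span}) writes each such special divisor as a rational linear combination of Heegner divisors $Z_L(t,\mu)$. Hence $\lambda_L$ and the Heegner divisor classes generate. This part only needs $n\geq5$, but it certainly holds under $n\geq10$.

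\emph{Extension.} The closure of the open part is meant in the following sense: a class $\alpha\in\Pic(X_L)_\QQ$ extends if it lies in the image of the restriction map $\Pic(\overline X_L)_\QQ\to\Pic(X_L)_\QQ$.
\begin{itemize}
\item If $\alpha=c\lambda_L$, then $c\bar\lambda_L$ restricts to $\alpha$, so $\alpha$ extends.
\item Conversely, suppose $\alpha=\bar\alpha|_{X_L}$ with $\bar\alpha\in\Pic(\overline X_L)_\QQ$. Theorem~\ref{thm:rank10} gives $\bar\alpha=c\bar\lambda_L$, and restricting gives $\alpha=c\lambda_L$.
\end{itemize}
The injectivity in \eqref{eq:BB-Picard-injections} shows moreover that the extension is unique, so the extendable classes form exactly the line $\QQ\lambda_L$.

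The argument is a formal combination of earlier results, so I expect no real obstacle. The one point needing care is that restriction from $\overline X_L$ to $X_L$ is compatible with the Hodge classes, i.e.\ $\bar\lambda_L|_{X_L}=\lambda_L$. This holds by the definition of $\bar\lambda_L$ as the extension of $\lambda_L$.
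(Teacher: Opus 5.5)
Your proposal is correct and matches the paper's proof. Generation comes from Theorem~\ref{thm:picard-inputs}(1) together with the codimension-one case of Theorem~\ref{thm:comparison-main}, and the extension criterion comes from Theorem~\ref{thm:rank10} (the case $\Gamma=\widetilde{\rO}^{+}(L)$ of Theorem~\ref{thm:picard-main}) combined with the injectivity of restriction in \eqref{eq:BB-Picard-injections}; the one tacit step, also left implicit in the paper, is that the comparison theorem is stated in $\CH^1(X_L)_\QQ$, which agrees with $\Pic(X_L)_\QQ$ because $X_L$ is $\QQ$-factorial.
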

\begin{proof}
    Generation follows from Theorem~\ref{thm:picard-inputs}(1) and Theorem~\ref{thm:comparison-main}. By \eqref{eq:BB-Picard-injections}, restriction from the compactification is injective, and Theorem~\ref{thm:picard-main} identifies its image with $\QQ\lambda_L$.
\end{proof}

\begin{remark}
    The bound $n\geq10$ is used in Proposition~\ref{prop:splitting}. In dimension nine, the length bounds of Proposition~\ref{prop:leaves} do not imply a splitting of $3U_p$ at every prime. For example, consider
    \[
    T=U^{\oplus2}\perp V\perp V(2)^{\oplus2}\perp\langle-4\rangle,
    \]
    where $V$ is the rank $2$ negative-definite even lattice with Gram matrix 
	\[
    \begin{pmatrix}
        -2&-1\\
        -1&-2
    \end{pmatrix}.
	\]
    At $p=2$, we have
	\[
    T_2\simeq U_2^{\oplus2}\perp V_2\perp K_2, \qquad K_2=\bigl(V(2)^{\oplus2}\perp\langle-4\rangle\bigr)_2.
	\]
	Moreover,
	\[
    \operatorname{rk}_{\ZZ_2}(K_2)=\ell_2(A_T)=5.
	\]
	Thus the decomposition is the normal form in the second case of \cite[Corollary~1.9.3]{Ni79}. By uniqueness of normal form, $T_2$ cannot admit a decomposition $U_2^{\oplus3}\perp K_2'$. 
\end{remark}

\bibliographystyle{amsalpha}
\bibliography{main}

\end{document}